\let\oldsqrt\sqrt
\def\sqrt{\mathpalette\DHLhksqrt}
\def\DHLhksqrt#1#2{%
\setbox0=\hbox{$#1\oldsqrt{#2\,}$}\dimen0=\ht0
\advance\dimen0-0.2\ht0
\setbox2=\hbox{\vrule height\ht0 depth -\dimen0}%
{\box0\lower0.4pt\box2}}
\newcommand{\R}{\mathbb{R}} % reelle Zahlen
\newcommand{\dist}{\textnormal{dist}} % dist ...
\newcommand{\diam}{\textnormal{diam}} % diam ...
\newcommand{\supp}{\textnormal{supp}} % supp ...
\newcommand{\essinf}{\textnormal{essinf}} % essinf ...
\newcommand{\hl}{(-\Delta)^{s}} % alpha/2 Laplace
\newcommand{\Om }{\Omega }
\newcommand{\de }{\partial }
\newcommand{\ov}{\overline}
\renewcommand{\phi}{\varphi}
\newcommand{\cB}{{\mathcal B}}
\newcommand{\cD}{{\mathcal D}}
\newcommand{\cE}{{\mathcal E}}
\newcommand{\cH}{{\mathcal H}}
\renewcommand{\a }{\alpha }
\renewcommand{\b }{\beta }
\renewcommand{\d}{\delta }
\newcommand{\e }{\varepsilon }
\newcommand{\n }{\nabla }
\renewcommand{\phi}{\varphi}
\renewcommand{\O }{\Omega }
\newcommand{\be}{\begin{equation}}
\newcommand{\ee}{\end{equation}}
\theoremstyle{definition}
\newtheorem{defi}{Definition}[section]
\newtheorem{bem}[defi]{Remark}
\theoremstyle{plain} %default%plain
\newtheorem{satz}[defi]{Theorem}
\newtheorem{prop}[defi]{Proposition}
\newtheorem{lemma}[defi]{Lemma}
\newtheorem{cor}[defi]{Corollary}
\theoremstyle{definition}
\numberwithin{equation}{section}
\title{Overdetermined problems with fractional Laplacian}
\author{Mouhamed Moustapha Fall \footnote{African Institute
for Mathematical Sciences (AIMS) of Senegal. Km 2, Route de Joal. BP 1418
Mbour, Senegal. mouhamed.m.fall@aims-senegal.org} \hspace{1ex}and
Sven Jarohs\footnote{Goethe-Universit\"{a}t Frankfurt, Institut
f\"{u}r Mathematik. Robert-Mayer-Str. 10 D-60054 Frankfurt, Germany.
jarohs@math.uni-frankfurt.de.} }
\date{\today}
\begin{document}
\maketitle
\begin{abstract}
Let $N\geq 1$ and $s\in (0,1)$.
In the present work we characterize bounded  open sets $\O$ with $ C^2$ boundary (\textit{not necessarily connected})
for which the following  overdetermined problem
\begin{equation*}
\left\{\begin{aligned}
( -\Delta)^s u &= f(u) &&\text{ in $\Omega$;}\\
 u&=0  &&\text{ in $\mathbb{R}^N\setminus \Omega$;}\\
 (\partial_{\eta})_s u&=Const.  &&\text{ on $\partial \Omega$}
\end{aligned}\right.
\end{equation*}
has a nonnegative and nontrivial solution, where $\eta $ is the outer unit normal vectorfield along
$\partial\Omega$
 and for $x_0\in\partial\Omega$
\[
 \left(\partial_{\eta}\right)_{s}u(x_{0})=-\lim_{t\to 0}\frac{u(x_{0}-t\eta(x_0))}{t^s}.
\]
 Under mild assumptions on $f$, we prove that $\Omega$ must be a
 ball. In the special case $f\equiv 1$, we obtain an extension of
 Serrin's result in 1971.  The fact that  $\Omega$ is not assumed to be connected is related to the nonlocal property of the fractional Laplacian.
   The main ingredients in our  proof  are maximum principles   and the method of moving planes.
\end{abstract}
{\footnotesize
\begin{center}
\textit{Keywords.} fractional Laplacian $\cdot$ maximum principles
$\cdot$ Hopf's Lemma $\cdot$ overdetermined problems.
\end{center}
\begin{center}
\textit{Mathematics Subject Classification:} %(2011)
 35B50 %Maximum principles
$\cdot$ 35N25 %overdetermined boundary value probs
\end{center}
}
\section{Introduction}
Let $\Omega\subset\R^N$, $N\geq2$, be a bounded connected open set with $C^2$ boundary.
 By the method of moving planes,  Serrin  proved, in his celebrated paper \cite{S71}, that if there is a solution of the overdetermined problem
\begin{equation}\label{eq:Serrin}
\left\{\begin{aligned}
 -\Delta u &= 1 &&\text{ in $\Omega$;}\\
 u&=0  &&\text{ on $\partial \Omega$;}\\
  u&>0  &&\text{ in $ \Omega$;}\\
 \partial_\eta u&=c  &&\text{ on $\partial \Omega$}
\end{aligned}\right.
\end{equation}

 then $\Omega$ must be a ball. The motivations for    this
 problem are both from fluid and solid mechanics.
 Serrin's argument initiated extensive researches in the field of overdetermined problems and symmetry properties of solutions to partial differential equations, see   e.g. \cite{GNN1,GNN2} for applications.
 The moving plane method  was first employed in geometry
 by Alexandrov in \cite{Alexandrov} to  characterize the sphere as the only
 embedded closed  hypersurfaces with constant mean curvature.
Over the years there are methods off moving plane to obtain Serrin's result, see e.g.   \cite{Wein-Serrin}, \cite{BNST} and also \cite{FK} which
 reveal  the application of Alexandrov's result in the study of
 \eqref{eq:Serrin}. In those arguments $\Omega$ were assumed to be connected.\\
 Several works have been devoted to related Serrin's problem.  Since a complete list of references cannot be given, we only quote:  \cite{AM, BD,  BH, BK, CS, CH, dLS,   FV1, FV2, FG, FGK, GL, HHP,  Pr, Ra, Re1, Re2, Si,  SS}.

   In the present
   work we  study an overdeterminded problem involving the fractional
    Laplacian $\hl$, $s\in(0,1)$. It is  defined for $u\in C^{\infty}_{c}(\R^{N})$ as
\[
 \hl u(x):= c_{N,s} P.V. \int_{\R^{N}}\frac{u(x)-u(y)}{|x-y|^{N+2s}}\ dy,
\]
where
\begin{equation}
 c_{N,s}=s(1-s)4^s\pi^{-N/2}\frac{\Gamma(\frac{N}{2}+s)}{\Gamma(2-s)}\label{fracsconstant}
\end{equation}
is a normalization constant (see e.g. \cite{NPV11}) so that
$\widehat{\hl u}(\xi)=|\xi|^{2s}\widehat{u}(\xi)$.
 Due to
applications in Physics, Biology and Finance, differential
equations involving the fractional Laplacian $\hl$ have received
growing attention in recent years (see e.g.
\cite[Introduction]{NPV11} and the references therein) but still much less understood than their non-fractional
counterparts.   The
overdeterminded problem we are interested in is
$$
(\ast)\left\{\begin{aligned}
 \hl u &= 1 &&\text{ in $\Omega$;}\\
 u&=0  &&\text{ in $\R^{N}\setminus\Omega$;}\\
 \left(\partial_\eta\right)_{s} u&\equiv c  &&\text{ on $\partial \Omega$,}
\end{aligned}\right.
$$
where $c $ is a negative constant, $\Omega\subset\R^{N}$ is a bounded open set with $C^{2}$ boundary, $\eta(x_0)$ stands for the outer normal at $x_{0}\in\partial \Omega$ and

\[
 \left(\partial_{\eta}\right)_{s}u(x_{0}):=-\lim_{t\to 0}\frac{ u(x_{0}-t\eta(x_0))}{t^s}.
\]
We note that solutions to the first two equations in
$(\ast)$ are  in $C^s(\overline{\Omega})$ (see e.g. \cite[Proposition 2.9]{S05} or \cite[Proposition 1]{PSV13}) and
therefore the overdetermined Neuman condition make sense pointwise. Our first result is contained in the following
\begin{satz}\label{main}
 Let $\Omega\subset\R^{N}$, $N\geq 1$, be a bounded open set with $C^{2}$ boundary. Assume
 that there is a   solution $u\in C^s(\overline{\Omega})$ of $(\ast)$ satisfying
 \begin{equation}\label{missing assumption}
 	\frac{u}{\delta^s_{\Omega}}\in C^1(\overline{\Omega}),\quad\text{with}\quad\delta_{\Omega}(x):=\dist(x,\R^N\setminus \Omega).
 \end{equation}
 Then $\Omega=B_R(x_0)$, a ball centered at $x_0$ with radius $R$. In addition  $u(x)=\gamma_{N,s}\left(R^2-|x-x_{0}|^2\right)^s$
 for all $x\in B_R(x_0)$, where
 \[
\gamma_{N,s}:=\frac{4^{-s}\Gamma(\frac{N}{2})}{\Gamma(\frac{N}{2}+s)\Gamma(1+s)}.
\]
\end{satz}

We point out that the precise nonnegative and nontrivial  function solving $(\ast)$ when $\Omega$ is a ball were
calculated  in  \cite{BB00} (see also \cite{D12}).
 We should mention that   the overdetermined problem
$(\ast)$ arises from  Euler-Lagrange equations of
domain dependent variational problems involving the fractional
Laplacian. We refer to \cite{DGV12} where the authors considered the
case $N=2$, $s=1/2$ and $\partial\Omega$ of class $C^\infty$ and assumed $\Omega$ to be connected. In
addition, under the  aforementioned assumptions,  the authors
obtained similar result via the moving plane method.
 We also mention that the authors in \cite{DGV12} used the fact that the $1/2$-Laplacian can be
realized locally via a  Dirichlet-to-Neumann operator.

Here we use the moving plane argument only in the fractional framework which is a fundamental difference with respect to \cite{DGV12}. In particular our approach could be feasible for more general types of integro-differential equations. Moreover, we take advantages to the nonlocal
structure of the fractional laplacian to construct  subsolutions
yielding a Hopf lemma (see Lemma \ref{hopf}) and a version of
Serrin's corner boundary point lemma (see Lemma \ref{lem:max-corner}). The nonlocal
structure of $(-\Delta)^s$ forces the Dirichlet condition in $\R^N\setminus\Omega$.
This is a key property which allows us to prove that  $\Omega$ must  be connected at some stage during the moving plane process.\\
The moving plane method have been also employed within the fractional framework in \cite{BLW05, Chen_Li_Ou,  FW2, FQT, JW, FW13}.\\

Our next result is a generalization of the previous one.

\begin{satz}\label{main2}
  Let $c\in\R$  and  $\Omega\subset\R^{N}$, $N\geq 1$, be a bounded open set with  $C^{2}$ boundary.
   Furthermore, let $f:\R\to\R$ be locally Lipschitz and assume that there is a function $u\in C^s(\overline{\Omega})$, which is nonnegative, nontrivial and satisfies \eqref{missing assumption} and
\begin{equation}\label{new prob}
\left\{\begin{aligned}
 \hl u &= f(u) &&\text{ in $\Omega$;}\\
 u&=0  &&\text{ in $\R^{N}\setminus\Omega$;}\\
 \left(\partial_\eta \right)_{s} u&=c  &&\text{ on $\partial \Omega$.}
\end{aligned}\right.
\end{equation}
 Then $\Omega$ is a ball and $u>0$ in $\Omega$.
\end{satz}

\begin{bem}
	We point out that Assumption \eqref{missing assumption} has been missing in a previous version, but is used in a key step of the proof, see Lemma \ref{0-derivative} below. This has been noted in \cite{DPTV23}, \cite[Lemma and Remark 3.1]{JKS25} and it can be seen as the fractional counterpart to the assumption of $u$ being in $C^2(\overline{\Omega})$ in the classical result by Serrin for $s=1$. If, in addition, $\Omega$ is assumed to be of class $C^{2,\alpha}$ for some $\alpha>0$, then the solution of ($\ast$) is known to satisfy \eqref{missing assumption}. Moreover, if $s>\frac12$, then this is also true for a solution to \eqref{new prob}.
\end{bem}

The paper is organized as follows. In Section \ref{defs} we will
give some preliminaries and   notation.  Dealing with
  the moving plane method leads to dealing with antisymmetric functions so we will give a proof of Hopf's Lemma and some  maximum principles for antisymmetric functions in  Section
  \ref{anti-results}.
  Finally, Section \ref{overdetermined}
     is devoted to the proof of Theorem \ref{main} and in Section \ref{s:Gen}, we prove Theorem \ref{main2}.

\medskip
\noindent \textbf{Acknowledgement:} The authors would like to thank
Tobias Weth for helpful discussions and  Xavier Ros-Oton for taking
their attention to \cite{D12}. Part of this work was done while the
second author was visiting AIMS-Senegal. He would like to thank them
for their kind hospitalities. The first author is supported by the Alexander von Humboldt foundation.

\section{Definitions and Notation}\label{defs}
Let  $N\geq 1$ and  $s\in (0,1)$.
For $u,v\in H^{s}(\R^{N})$, we consider the bilinear form induced by the fractional laplacian:
\[
 \cE(u,v):=\frac{c_{N,s}}{2}\int_{\R^{N}}\int_{\R^{N}}\frac{(u(x)-u(y))(v(x)-v(y))}{|x-y|^{N+2s}}\ dxdy.
\]
 Furthermore, let
\[
 \cH^{s}_{0}(\Omega)=\{u\in H^{s}(\R^{N})\;:\; u=0 \text{ on $\R^{N}\setminus \Omega$}\},
\]
where $\Omega\subset \R^{N}$ is an arbitrary open set.
 If $\Omega$ is bounded, we define the first Dirichlet eigenvalue:
$$
\lambda_1(\Omega)=\min_{u\in \cH^{s}_{0}(\Omega)
}\frac{\displaystyle\cE(u,u)}{\displaystyle\int_{\Omega}u^2dx}.
$$
Then we have  $\lambda_1(\Omega)\geq C_{N,s}|\O|^{-\frac{2s}{N}}$, see e.g. \cite{YY}, where $C_{N,s}=\frac{N}{2s}|B_1(0)|^{1+2s/N}c_{N,s}$ and $c_{N,s}$ is the constant in (\ref{fracsconstant}).

In the following all equalities involving $\hl$ are meant in the weak sense, i.e. for $g\in L^{2}(\Omega)$
 we call $u\in \cD^{s}(\Omega):=\{u:\R^{N}\to \R \text{ measurable}\;:\; \cE(u,\varphi)<\infty\text{ for all }\varphi\in \cH^{s}_{0}(\Omega)\}$ a \textit{supersolution (subsolution)} of
\begin{equation}\label{eq:Dsueqg}
 \hl u= g \quad   \text{ in $\Omega$},
\end{equation}
if for all $\varphi \in \cH^{s}_{0}(\Omega)$, $\varphi \geq 0$
\[
 \cE(u,\varphi)\geq \int_{\Omega}g(x) \varphi(x)\ dx  \quad\left( \cE(u,\varphi)\leq \int_{\Omega}g(x) \varphi(x)\ dx\right).
\]
We call $u\in \cD^{s}(\Omega)$ an entire supersolution (subsolution) if additionally we have
\[
 u\geq0 \text{ on $\R^{N}\setminus \Omega$} \quad (  u\leq0 \text{ on $\R^{N}\setminus \Omega$}).
\]
The function $u$ is called a \textit{solution}, if $u$ is an entire supersolution and an entire subsolution.

We note that if $u$ is an entire supersolution of (\ref{eq:Dsueqg}) then $u_{-}=-\min\{w,0\}\in \cH^{s}_{0}(\Omega)$.

Finally we want to note that if $u\in C^{1,1}(\Omega) $,
for some open set $\Omega\subset\R^{N}$, we have that $\hl u$
is continuous on $\Omega$ (see e.g. \cite[Proposition 2.5]{S05}).
 Thus if $u$ has such regularity  \eqref{eq:Dsueqg} holds pointwise.\\

The following further notation is used throughout the paper:
 for $x \in \R^N$ and $r>0$, $B_r(x)$ is the open ball centered at $x$ with radius
$r$ and $\omega_{N}$ will denote the volume of the $N$-dimensional ball with radius $1$.
Moreover, we denote $S^{1}:=\{x\in\R^{N}\;:\; |x|=1\}$.
For any subset $M \subset \R^N$, we denote by $1_M: \R^N \to \R$ the
characteristic function of $M$ and $\diam(M)$ the diameter of
$M$. The notation $A\subset\subset B$, $A,B\subset \R^{N}$ means that we have $\overline{A}\subset B$ and $\overline{A}$ is compact and nonempty.

Moreover, $w_+= \max\{w,0\}$ and $w_-=-\min\{w,0\}$ denote the
positive  and negative part of $w$ resp. For any $M\subset\R^{N}$, $|M|$ denotes the Lebesgue measure of $M$ and for $D,U\subset\R^{N}$ we set
$$
\dist(D,U):= \inf(|x-y|\::\: x \in D,\, y \in U\}.
$$
If $D= \{x\}$, we simply write $\dist(x,U)$ in place
of $\dist(\{x\},U)$. Finally, we denote for $\Omega\subset\R^{N}$,
\[
 \delta(x):= \delta_{\Omega}(x):=\dist(x,\R^{N}\setminus \Omega)
\]
the distance to the complement of a set $\Omega$. We will omit the subindex $\Omega$, whenever no confusion is possible.
\\[0.1cm]

\section{Maximum principles for entire antisymmetric supersolutions}\label{anti-results}

Due to the fact that we will work with the moving plane method in the following,
 we will need to prove some results concerning antisymmetric functions.
  Let $H\subset \R^{N}$ be a halfspace, i.e. for any $\lambda\in \R$, $e\in S^{1}$
  we consider
$$
  H:=H_{\lambda,e}:=\{x\in \R^{N}\;:\; x\cdot e>\lambda\}.
$$
Let $\Omega\subset H$ be a  bounded open set. Let
$T:=\partial H$ and
   denote by $Q:\R^{N}\to \R^{N}$, $x\mapsto \bar{x}$ the reflection of $x$ at $T$,
   i.e. $\bar{x}=x-2(x\cdot e)e+2\lambda e$.  We will  call $u\in \cD^{s}(\Omega)$ an \textit{entire antisymmetric supersolution}
   of $\hl u=g$ in $\Omega$, if $u$ is a supersolution of $\hl u=g$ in $\Omega$ and if additionally we have $u\geq0$
    on $H\setminus \Omega$ and $u(\bar{x})\leq-u(x)$ for all $x\in H$.

\begin{prop}[Weak Maximum Principle]\label{4-elliptic-max}
Let $H$ be a halfspace  and let
$\Omega\subset H$ be any open, bounded set, let $c\in
L^{\infty}(\Omega)$ be such that $c\leq
c_{\infty}<\lambda_1(\Omega)$ in $\Omega$ for some $c_{\infty}\geq0$
and let $g\in L^{2}(\Omega)$ be, such that $g\geq -\kappa$ with
\[
 0\leq\kappa<\frac{\lambda_1(\Omega)-c_{\infty}}{|\Omega|^{1/2}}.
\]
If $u$ is an entire antisymmetric supersolution of
\begin{equation}\label{4-elli}
\begin{aligned}
 \hl u &= c(x)u+g(x) &&\text{ in $\Omega$}\\
\end{aligned}
\end{equation}
then $\|u_{-}\|_{L^{2}(\Omega)}\leq\kappa|\Omega|^{1/2}/(\lambda_1(\Omega)-c_{\infty})<1$.
In particular, if $\kappa=0$ then $u\geq 0$ almost everywhere in
$\Omega$.
\end{prop}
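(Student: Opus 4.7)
My plan is to test the supersolution inequality with a suitable nonnegative member of $\mathcal{H}^s_0(\Omega)$ and combine with a Poincar\'{e}-type estimate; the natural candidate is $\varphi := u_-\mathbf{1}_\Omega$. I would first argue that $\varphi\in\mathcal{H}^s_0(\Omega)$ with $\varphi\geq 0$: the entire-supersolution hypothesis forces $u_-=0$ on $H\setminus\Omega$, while antisymmetry gives $u=0$ (hence $u_-=0$) on $T=\partial H$, so $u_-$ vanishes on $\partial\Omega$. Inserting this test function and using $u\cdot u_-=-u_-^2$ pointwise together with $c\leq c_\infty$ and $g\geq -\kappa$ yields the lower bound
\[
\cE(u,\varphi)\geq \int_\Omega (cu+g)u_-\,dx \geq -c_\infty\|u_-\|_{L^2(\Omega)}^2 - \kappa\|u_-\|_{L^1(\Omega)}.
\]

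The main step is to establish the upper bound $\cE(u,\varphi)\leq -\cE(\varphi,\varphi)$, and here antisymmetry enters crucially. A direct change of variables, using $u\circ Q=-u$ together with the isometry of $Q$, shows that $\cE(u,\psi\circ Q)=-\cE(u,\psi)$ for any $\psi$. Writing $\bar{\varphi}:=\varphi-\varphi\circ Q$ for the antisymmetric extension then produces $\cE(u,\bar{\varphi})=2\cE(u,\varphi)$; expanding by splitting the double integral over $(H\cup H^c)^2$ and folding $H^c$ back into $H$ via $Q$ gives
\begin{align*}
\cE(u,\varphi) &= \frac{c_{N,s}}{2}\int_H\!\!\int_H \frac{(u(x)-u(y))(u_-(x)-u_-(y))}{|x-y|^{N+2s}}\,dxdy \\
&\quad + \frac{c_{N,s}}{2}\int_H\!\!\int_H \frac{(u(x)+u(y))(u_-(x)+u_-(y))}{|x-\bar{y}|^{N+2s}}\,dxdy.
\end{align*}
Writing $u=u_+-u_-$ and using $u_+u_-=0$ pointwise together with the sign estimate $(u_+(x)-u_+(y))(u_-(x)-u_-(y))\leq 0$ lets me discard all terms of favorable sign. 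A parallel expansion of $\cE(\varphi,\varphi)$ produces the same structure except that $(u_-(x)+u_-(y))^2$ in the reflected-kernel integrand is replaced by $u_-(x)^2+u_-(y)^2$, and the elementary inequality $(a+b)^2\geq a^2+b^2$ for $a,b\geq 0$ then delivers the sought inequality $\cE(u,\varphi)+\cE(\varphi,\varphi)\leq 0$.

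To close the argument, I would combine the two bounds with the Poincar\'{e} inequality $\cE(\varphi,\varphi)\geq \lambda_1(\Omega)\|u_-\|_{L^2(\Omega)}^2$ and Cauchy--Schwarz $\|u_-\|_{L^1(\Omega)}\leq |\Omega|^{1/2}\|u_-\|_{L^2(\Omega)}$ to obtain
\[
(\lambda_1(\Omega)-c_\infty)\|u_-\|_{L^2(\Omega)}^2 \leq \kappa|\Omega|^{1/2}\|u_-\|_{L^2(\Omega)},
\]
from which the stated bound on $\|u_-\|_{L^2(\Omega)}$ follows, and is strictly less than $1$ by the hypothesis on $\kappa$; when $\kappa=0$, the inequality forces $u_-\equiv 0$ and hence $u\geq 0$ a.e.\ in $\Omega$. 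The principal obstacle is the antisymmetric decomposition itself: the naive approach of testing with $u_-$ directly fails because $u_-\notin \mathcal{H}^s_0(\Omega)$ (it does not vanish on $H^c$), so I must introduce the antisymmetric extension and carefully track the additional contribution from the reflected kernel $|x-\bar{y}|^{-N-2s}$, which encodes the boundary interaction at $T$ forced by antisymmetry.
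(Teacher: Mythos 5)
Your overall strategy coincides with the paper's: test the supersolution inequality with $\varphi=u_{-}1_{H}$, prove the key estimate $\cE(u,\varphi)\leq -\cE(\varphi,\varphi)$, and close with the Poincar\'e inequality and Cauchy--Schwarz. The lower bound $\cE(u,\varphi)\geq -c_\infty\|u_-\|_{L^2}^2-\kappa\|u_-\|_{L^1}$ and the final algebra are correct, and the halfspace decomposition of $\cE(u,\varphi)$ and $\cE(\varphi,\varphi)$ into direct-kernel and reflected-kernel pieces over $H\times H$ is also correct and is a clean way to organize the computation.

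However, your derivation of $\cE(u,\varphi)+\cE(\varphi,\varphi)\leq 0$ has a genuine gap in the reflected-kernel contribution. Expanding $u=u_+-u_-$ in $(u(x)+u(y))(u_-(x)+u_-(y))$ gives
\[
(u_+(x)+u_+(y))(u_-(x)+u_-(y)) - (u_-(x)+u_-(y))^2 \;=\; u_+(x)u_-(y)+u_+(y)u_-(x) - (u_-(x)+u_-(y))^2,
\]
using $u_+u_-=0$ pointwise. The cross term $u_+(x)u_-(y)+u_+(y)u_-(x)$ is \emph{nonnegative}, hence it has the \emph{unfavorable} sign for your upper bound, and it cannot be discarded. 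Your inequality $(a+b)^2\geq a^2+b^2$ only handles the comparison between $-(u_-(x)+u_-(y))^2$ and $+\bigl(u_-(x)^2+u_-(y)^2\bigr)$; it does nothing to control $u_+(x)u_-(y)+u_+(y)u_-(x)$. As it stands, summing your two decompositions gives
\[
\cE(u,\varphi)+\cE(\varphi,\varphi) = -c_{N,s}\!\int_H\!\!\int_H \frac{u_+(x)u_-(y)}{|x-y|^{N+2s}}\,dx\,dy + c_{N,s}\!\int_H\!\!\int_H \frac{u_+(x)u_-(y)}{|x-\bar y|^{N+2s}}\,dx\,dy - c_{N,s}\!\int_H\!\!\int_H \frac{u_-(x)u_-(y)}{|x-\bar y|^{N+2s}}\,dx\,dy,
\]
and nothing in your argument makes the first two terms together nonpositive. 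The missing ingredient is the geometric kernel monotonicity $|x-y|\leq|x-\bar y|$, hence $|x-y|^{-N-2s}\geq|x-\bar y|^{-N-2s}$, valid for all $x,y\in H$. This is exactly what the paper invokes (it is what encodes $\Omega\subset H$), and with it the first two terms combine into the nonpositive quantity $-c_{N,s}\int_H\int_H u_+(x)u_-(y)\bigl(|x-y|^{-N-2s}-|x-\bar y|^{-N-2s}\bigr)dxdy\leq 0$. In short: you cannot ``discard'' the $u_+u_-$ cross-term from the direct-kernel integral, because the same cross-term reappears with opposite sign under the smaller reflected kernel, and only the kernel comparison lets the two cancel in the right direction.
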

\begin{proof}
Note that $\varphi:=u_{-}1_{H}\in \cH^{s}_{0}(\Omega)$ and
\begin{align*}
 (u(x)-u(y))(\varphi(x)-\varphi(y))+(\varphi(x)-\varphi(y))^{2}&=-u(x)\varphi(y)-u(y)\varphi(x)-2\varphi(x)\varphi(y)\\
&=-\varphi(x)\left(\varphi(y) + u(y)\right)-\varphi(y)\left(\varphi(x) +u(x)\right).
\end{align*}
Thus we have
\begin{align}
 \cE(u,\varphi)&=-\cE(\varphi,\varphi)-c_{N,s}\int_{\R^{N}}\int_{\R^{N}}\frac{\varphi(y)\left(\varphi(x)+u(x)\right)}{|x-y|^{N+2s}}\ dxdy\notag\\
&=-\cE(\varphi,\varphi)-c_{N,s}\int_{H}\int_{H}\frac{\varphi(y)u_{+}(x)}{|x-y|^{N+2s}}+\frac{\varphi(y)u(\bar{x})}{|\bar{x}-y|^{N+2s}}\ dxdy\notag\\
&\leq-\cE(\varphi,\varphi)-c_{N,s}\int_{H}\int_{H}\frac{\varphi(y)u_{+}(x)}{|x-y|^{N+2s}}-\frac{\varphi(y)u(x)}{|\bar{x}-y|^{N+2s}}\ dxdy\notag\\
&=-\cE(\varphi,\varphi)-c_{N,s}\int_{H}\varphi(y)\int_{H}u_{+}(x)\left(\frac{1}{|x-y|^{N+2s}}-\frac{1}{|\bar{x}-y|^{N+2s}}\right) + \frac{u_{-}(x)}{|\bar{x}-y|^{N+2s}}\ dxdy\notag\\
&\leq -\cE(\varphi,\varphi)\label{3-ineq-max}.
\end{align}
Thus we have
\begin{align*}
0&\leq \cE(u,\varphi)+\int_{\Omega}c(x)\varphi^{2}(x)\ dx-\int_{\Omega}g(x)\varphi(x)\ dx\\
&\leq -\cE(\varphi,\varphi) + c_{\infty}\|\varphi\|^2_{L^{2}(\Omega)} + \kappa \int_{\Omega}\varphi(x)\ dx\\
&\leq
\left(c_{\infty}-\lambda_1(\Omega)\right)\|\varphi\|^2_{L^{2}(\Omega)}
+ \kappa |\Omega|^{1/2}\|\varphi\|_{L^{2}(\Omega)}.
\end{align*}
If $\|\varphi\|_{L^{2}(\Omega)}>1$  we have, since $\|\varphi\|_{L^{2}(\Omega)}< \|\varphi\|^2_{L^{2}(\Omega)}$,
 \begin{align*}
0&\leq \left(c_{\infty}-\lambda_1(\Omega)\right)\|\varphi\|^2_{L^{2}(\Omega)} + \kappa |\Omega|^{1/2}\|\varphi\|_{L^{2}(\Omega)}\\
&<
\left(c_{\infty}+\kappa|\Omega|^{1/2}-\lambda_1(\Omega)\right)\|\varphi\|_{L^{2}(\Omega)}\leq
0,
\end{align*}
since $c_{\infty}-\lambda_1(\Omega)+\kappa|\Omega|^{1/2}<0$,
resulting in a contradiction. Thus we must have
$\|\varphi\|_{L^{2}(\Omega)}\leq 1$. In this case we have
 \begin{align*}
0&\leq
\left(-\left(\lambda_1(\Omega)-c_{\infty}\right)\|\varphi\|_{L^{2}(\Omega)}
+ \kappa |\Omega|^{1/2}\right)\|\varphi\|_{L^{2}(\Omega)},
\end{align*}
Thus we must have
$\|\varphi\|_{L^{2}(\Omega)}\in[0,\kappa|\Omega|^{1/2}/\left(\lambda_1(\Omega)-c_{\infty}\right)]$,
finishing the proof.
\end{proof}

\begin{bem}\label{maxprinzip-zusatz}
 Note that the result also holds if $u$ is an entire supersolution of $\hl u= c(x) u+g$ in $\Omega$,
  where $\Omega\subset \R^{N}$ is an arbitrary open, bounded set. The proof then simplifies, since the inequality (\ref{3-ineq-max}) follows trivially.
\end{bem}

For the proof of the following version of Hopf's Lemma,  we will need the function $\psi_{B}$  satisfying
\begin{equation}\label{ball}
\begin{aligned}
 \hl \psi_{B} &= 1, &&\text{ in $B$;}\\
\end{aligned}
\end{equation}
where $B\subset\R^{N}$ is a  ball. If $B=B_{r}(x_{0})$ for some $r>0$ and $x_{0}\in \R^{N}$ we have (see e.g. \cite{BB00} or \cite{D12})
\[
\psi_{B}(x)= \gamma_{N,s}\left(r^{2}-|x-x_{0}|^2\right)_{+}^{s}, \quad \gamma_{N,s}:=\frac{4^{-s}\Gamma(\frac{N}{2})}{\Gamma(\frac{N}{2}+s)\Gamma(1+s)}.
\]
It  was proved  in \cite[Lemma 4.3]{BLW05}   that if $u$ is a continuous supersolution
 of $\hl u=0$ in some open set $\Omega\subset\R^{N}$, such that $u\equiv 0$ on $\R^{N}\setminus \Omega$,
 then we  have for any outernormal $\eta$ and any $x_{1}\in \partial\Omega$
 such that there is an interior ball $B\subset \Omega$ with $x_{1}\in \partial B\cap \partial \Omega$, that $\partial_{\eta} u(x_{1})=-\infty$
 this was enough to carry over a moving plane argument. Here  we will need precise behavior of antisymmetric solutions
 near some corner points at the boundary. A recent result in \cite{RS12} states that  if $u\in \cH^{s}_{0}(\Omega)$ is a
solution of $\hl u=g$ in $\Omega\subset\R^{N}$, with $g\in
L^{\infty}(\Omega)$, then $\frac{u}{\delta^s}\in
C^{0,\alpha}(\overline{\Omega})$, for some $\alpha\in (0,1)$. For $x_1\in \partial\Omega$, it therefore make sense to define
\[
 \left(\partial_{\eta}\right)_{s}u(x_{1}):=-\lim_{t\to 0^{+}} \frac{ u(x_{1}-t\eta(x_1))}{t^{s}}.
\]

\begin{prop}[Hopf's lemma]\label{hopf}
 Let $H\subset \R^{N}$ be a halfspace  and  $\Omega\subset H$. Consider a  ball $B_1\subset\subset H$ and  $B_1\subset\Omega$.
 Furthermore, let $c\in L^{\infty}(\Omega)$ and assume that $c_0=\|c\|_{L^{\infty}(\Omega) } <\lambda_1(B_1)$. Let  $u\in \cD^{s}(\Omega)$  be an entire antisymmetric supersolution of
\begin{equation}\label{4-elli-2}
\begin{aligned}
 \hl u &= c(x)u, &&\text{ in $\Omega$;}\\
\end{aligned}
\end{equation}
such that
\begin{equation}\label{4-elli-3}
\begin{aligned}
 u&\geq0,  &&\text{ in $H$.}\\
\end{aligned}
\end{equation}
Let $K$ be a measurable set with  $K\subset\subset H\setminus \overline{B_1}$, $|K|>0$ and  suppose that $\essinf_{K}u >0$. Then
there is a constant $d=d(N,s,\diam(B_1),K,\textrm{dist}(B_1,K),c_0,\essinf_{K}u)>0$ such that
\[
 u(x)\geq d \delta_{B_1}^{s}(x)\quad  \text{ for almost every $x\in B_1$.}
\]
In particular, if $u\in C(\overline{B_1})$ and $u(x_0)=0$, for some
$x_0\in\partial B_1$, then we have
\[
- \liminf_{t\to 0^{+}} \frac{u(x_{0}-t\eta(x_0))}{t^{s}}<0.
\]
\end{prop}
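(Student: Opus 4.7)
The idea is to construct an antisymmetric subsolution $\Phi$ on $B_{1}$ satisfying $\Phi\leq u$ on $H\setminus B_{1}$, and then apply the weak maximum principle (Proposition~\ref{4-elliptic-max}) to $w=u-\Phi$ with $\k=0$ to conclude $u\geq\Phi$ on $B_{1}$. The barrier $\Phi$ will combine the antisymmetric extension of the torsion function $\psi_{B_{1}}$ of $B_{1}$, which provides the correct boundary behaviour $\d_{B_{1}}^{s}$, and an antisymmetric indicator supported in $K\cup Q(K)$, whose fractional Laplacian produces a strictly negative pointwise term on $B_{1}$.

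Write $B_{1}=B_{r_{1}}(x_{0})$ and $\psi_{B_{1}}(x)=\g_{N,s}(r_{1}^{2}-|x-x_{0}|^{2})_{+}^{s}$, the torsion function from~\eqref{ball}. Since $B_{1}\subset\subset H$, one has $Q(B_{1})\cap B_{1}=\emptyset$, and the antisymmetrization $\ti\psi(x):=\psi_{B_{1}}(x)-\psi_{B_{1}}(\bar x)$ equals $\psi_{B_{1}}$ on $H$ and is supported in $B_{1}\cup Q(B_{1})$. Splitting the defining integral for $\hl\ti\psi$ between $H$ and $\R^{N}\setminus H$ and subtracting the identity $\hl\psi_{B_{1}}=1$ gives, for $x\in B_{1}$,
\[
\hl\ti\psi(x)=1+F(x),\qquad F(x)=c_{N,s}\int_{B_{1}}\frac{\psi_{B_{1}}(z)}{|x-\bar z|^{N+2s}}\,dz.
\]
Because $|x-\bar z|\geq 2\dist(B_{1},T)>0$, $F$ is uniformly bounded on $B_{1}$ by some $C_{1}$. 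Likewise, for $\chi(x):=1_{K}(x)-1_{Q(K)}(x)$ and $x\in B_{1}$, the change of variables $y\mapsto\bar y$ in the reflected integral yields
\[
\hl\chi(x)=-G(x),\qquad G(x)=c_{N,s}\int_{K}\Bigl(|x-y|^{-N-2s}-|x-\bar y|^{-N-2s}\Bigr)\,dy,
\]
which, via the identity $|x-\bar y|^{2}-|x-y|^{2}=4\dist(x,T)\dist(y,T)$ combined with the strict inclusions $B_{1}\subset\subset H$ and $K\subset\subset H\setminus\overline{B_{1}}$, is bounded below uniformly on $B_{1}$ by some $G_{*}>0$.

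Setting $m:=\essinf_{K}u>0$, $d:=m$, and $\e:=dG_{*}/(1+C_{1}+c_{0}\|\psi_{B_{1}}\|_{\infty})$, let $\Phi:=\e\ti\psi+d\chi$. Then $\Phi$ is antisymmetric, equals $\e\psi_{B_{1}}$ on $B_{1}$, equals $d$ on $K$, and vanishes on $H\setminus(B_{1}\cup K)$; the choice of $\e$ gives
\[
\hl\Phi-c(x)\Phi=\e\bigl(1+F(x)-c(x)\psi_{B_{1}}(x)\bigr)-dG(x)\leq 0\qquad\text{on }B_{1},
\]
so $\Phi$ is a subsolution, and on $H\setminus B_{1}$ we have $\Phi=d\,1_{K}\leq u$ by definition of $m$. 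Hence $w=u-\Phi$ is an entire antisymmetric supersolution of $\hl w=c(x)w$ in $B_{1}$ with $w\geq 0$ on $H\setminus B_{1}$; since $c_{0}<\l_{1}(B_{1})$, Proposition~\ref{4-elliptic-max} (with $\k=0$) yields $w\geq 0$ a.e.\ in $B_{1}$, i.e.\ $u\geq\e\psi_{B_{1}}\geq\e\g_{N,s}r_{1}^{s}\d_{B_{1}}^{s}$ in $B_{1}$. The \emph{in particular} claim follows at once by evaluating at $x=x_{0}-t\eta(x_{0})$, where $\d_{B_{1}}(x)=t$ for small $t>0$.

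The main technical step is the uniform control of $F$ from above and $G$ from below on $B_{1}$; both rely essentially on the two gap conditions $\dist(B_{1},T)>0$ and $\dist(B_{1},K)>0$. The genuinely nonlocal ingredient is the term $-dG$ coming from $\chi$: unlike in the classical Hopf lemma, it provides a strictly negative pointwise contribution uniform on $B_{1}$ which absorbs the positive torsion forcing $\e(1+F-c\psi_{B_{1}})$ once the ratio $\e/d$ is sufficiently small, thereby making the weak maximum principle applicable with $\k=0$.
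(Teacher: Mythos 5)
Your proof is correct and follows essentially the same strategy as the paper: the barrier $\Phi=\e\tilde\psi+d\chi$ is precisely a rescaling of the paper's barrier $w=\psi_{B_1}-\psi_{Q(B_1)}+\alpha(1_K-1_{Q(K)})$ (with $\alpha=d/\e$), and the uniform bounds on $F$ and $G$ together with the weak maximum principle give the same conclusion. The only cosmetic difference is that you verify the subsolution property of $\Phi$ via pointwise computations of $\hl\tilde\psi$ and $\hl\chi$ on $B_1$ (legitimate here since $B_1$ is at positive distance from $Q(B_1)$, $K$, $Q(K)$, so no singularity occurs), whereas the paper works directly with the bilinear form $\cE(w,\varphi)$.
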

   \begin{center}
\begin{figure}[htb]
   \begin{center}
  \fbox{\begin{minipage}{8cm}
   \begin{center}
     \includegraphics[width=\textwidth]{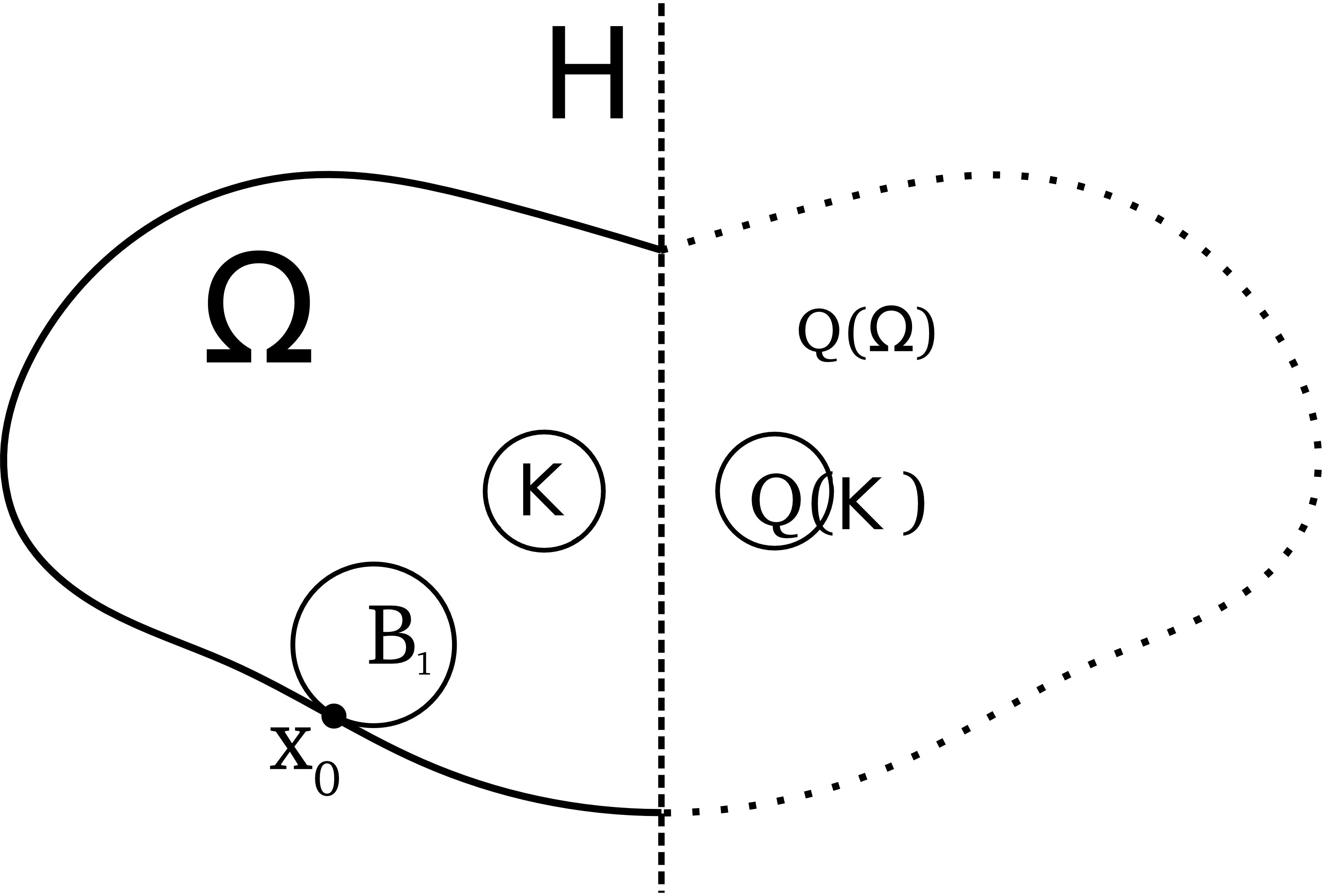}
   \end{center}
  \end{minipage}}
\caption{$K$ is an arbitratry measurable set of positive measure.}
   \end{center}
\end{figure}
   \end{center}
\begin{proof}
For $\alpha>0$, consider the barrier
\[
 w(x):=\psi_{B_{1}}(x)+\alpha1_{K}(x)-\psi_{Q(B_1)}(x)-\alpha 1_{Q(K)}(x),
\]
where $Q:\R^{N}\to\R^{N}$, $x\mapsto\bar{x}$ is the reflection at $\partial H$ and $1_M$ is the characteristic function for any $M\subset\R^N$. 
 Let $\varphi\in\cH^{s}_{0}(B_{1})$. Then we have 
\begin{align*}
 \cE(w,\varphi)&=\cE(\psi_{B_{1}},\varphi)+\alpha\cE(1_{K},\varphi)-\cE(\psi_{Q(B_{1})},\varphi)-\alpha\cE(1_{Q(K)},\varphi)\\
&=\int_{B_{1}}\varphi(x)\ dx -\alpha c_{N,s}\int_{B_{1}}\int_{K}\frac{\varphi(x)}{|x-y|^{N+2s}}\ dydx\\
&\qquad\qquad+\alpha c_{N,s}\int_{B_{1}}\int_{Q(K)}\frac{\varphi(x)}{|x-y|^{N+2s}}\ dydx+
c_{N,s}\int_{B_{1}}\int_{Q(B_{1})}\frac{\psi_{Q(B_{1})}(y)\varphi(x)}{|x-y|^{N+2s}}\ dydx.
\end{align*}
Therefore $w\in \cD^{s}(B_1)$ since $\dist(B_1,K)>0$. If moreover  $\varphi\geq0$, 
\begin{align*}
 \cE(w,\varphi)
&\leq \int_{B_{1}}\varphi(x) \left(\kappa-\alpha c_{N,s}\int_{K}\left(|x-y|^{-N-2s}-|x-\bar{y}|^{-N-2s}\right)\ dy\right)dx,
\end{align*}
where $\kappa:=1+\sup_{x\in B_1}\psi_{B_1}(x)\cdot \sup_{x\in B_1, \bar{y}\in H}|x-y|^{-N-2s}<\infty$ since $\dist(B_1,\R^{N}\setminus H)>0$. Moreover since $K$ has a positive distance to $\partial H$ and $B_1$ we have
\[
C_1=C_1(N,s,K,B_1)=c_{N,s}|K| \inf_{x\in B_{1},y\in K}|x-y|^{-N-2s}-|x-\bar{y}|^{-N-2s}>0.
\]
With this we have
\begin{equation*}
\cE(w,\varphi)\leq \int_{B_{1}}\varphi(x) \left(\kappa-\alpha C_{1}\right)dx.
\end{equation*}
Thus we may take $\alpha$ large so that $\kappa-\alpha C_1\leq -c_{0}\sup_{x\in B_{1}}\psi_{B_{1}}(x)$ and thus we have
\[
 \hl w\leq c(x)w \text{ in $B_{1}$.}
\]
 Note furthermore that, by construction,  $w= 0$ on $\R^{N}\setminus (B_{1}\cup K)$  and
 $w(\bar{x})= -w(x)$ for all $x\in \R^{N}$. By assumption, we can pick
 $$
 \e=\frac{\essinf_{K}u}{\alpha}>0
 $$
 so that  $v(x):=u(x)-\epsilon w(x) \geq0$ on $H\setminus B_{1}$.
  Since we chose $B_1$  such that $\|c\|_{L^{\infty}(B_1)}<\lambda_1(B_1)$, we can apply
  Proposition \ref{4-elliptic-max} to the supersolution $v$ yielding  $u\geq \epsilon w= \epsilon \psi_{B_{1}}$ in $B_{1}$.
  Finally, if   $u(x_{0})=0$ with $x_0\in\partial B_1$,  we have
\begin{align*}
%  \left(\partial_{\eta}\right)_{s}u(x_{0})&=
  - \liminf_{t\to 0^{+}} \frac{u(x_{0}-t\eta(x_0))}{t^{s}}
&\leq -\epsilon \lim_{t\to 0^{+}}
\frac{\psi_{B_{1}}(x_{0}-t\eta(x_0))}{t^{s}}<0.
\end{align*}

\end{proof}

As a consequence of Proposition \ref{hopf}, we have

\begin{cor}[Strong maximum principle]\label{strong1}
 Let $H\subset \R^{N}$ be a halfspace and let $\Omega\subset H$ be an open bounded set.
 Furthermore let $c\in L^{\infty}(\Omega)$ and $u\in \cD^{s}(\Omega)$ be an entire antisymmetric supersolution of
\begin{equation}\label{4-elli-4}
\begin{aligned}
 \hl u &= c(x)u, &&\text{ in $\Omega$.}\\
\end{aligned}
\end{equation}
If $u\geq0$ in $H$   then either $u\equiv0$ in $H$ or $u>0$ in
$\Omega$.
\end{cor}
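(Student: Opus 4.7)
The plan is to derive the strong maximum principle as an almost immediate consequence of Hopf's lemma (Proposition \ref{hopf}), which already carries the main nonlocal technical content. Assuming $u \not\equiv 0$ in $H$, the goal is to show $u > 0$ almost everywhere in $\Omega$.

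The first step is to manufacture the positivity set that Hopf requires. Since $u \geq 0$ and $u \not\equiv 0$ on the open set $H$, the superlevel set $\{x \in H : u(x) \geq \tau\}$ has positive Lebesgue measure for some $\tau > 0$, and by inner regularity of Lebesgue measure I can extract a compact set $K \subset\subset H$ with $|K| > 0$ and $\essinf_K u \geq \tau$; in particular $u > 0$ almost everywhere on $K$. Given this $K$, fix any $x_0 \in \Omega \setminus K$. Because $\Omega$ and $H$ are open, $x_0$ has positive distance to the compact set $K$, and the fractional Dirichlet eigenvalue scales as $\lambda_1(B_r(x_0)) = r^{-2s}\lambda_1(B_1(0))$, I can pick $r > 0$ so small that simultaneously $B_r(x_0) \subset \Omega$, $\overline{B_r(x_0)} \subset\subset H$, $\overline{B_r(x_0)} \cap K = \emptyset$, and $\|c\|_{L^\infty(\Omega)} < \lambda_1(B_r(x_0))$. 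These are exactly the hypotheses of Proposition \ref{hopf} applied with $B_1 := B_r(x_0)$ and the set $K$, so the lemma delivers a constant $d > 0$ such that $u(x) \geq d\, \delta_{B_r(x_0)}^s(x)$ for almost every $x \in B_r(x_0)$; in particular $u > 0$ a.e.\ on $B_r(x_0)$.

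A countable covering of the open set $\Omega \setminus K$ by such balls then yields $u > 0$ almost everywhere on $\Omega \setminus K$, and combining with $u > 0$ a.e.\ on $K \cap \Omega$ gives $u > 0$ almost everywhere on $\Omega$. The main obstacle in this plan is really only the joint verification of the four smallness conditions on $r$: the three geometric conditions are open and automatically satisfied for small $r$, whereas it is the scaling law for the fractional Dirichlet eigenvalue that keeps the quantitative condition $\|c\|_{L^\infty(\Omega)} < \lambda_1(B_r(x_0))$ compatible with every other constraint when $r$ is shrunk, which is exactly why Corollary \ref{strong1} drops out so cleanly from Hopf's lemma.
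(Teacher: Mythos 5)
Your proof is correct and carries out exactly what the paper gestures at with the phrase ``As a consequence of Proposition \ref{hopf}''; the paper offers no written proof, so the comparison is against the implicit intended argument. The key steps all check out: since $u\geq 0$ and $u\not\equiv 0$ in $H$, some superlevel set $\{u\geq\tau\}$ has positive measure and inner regularity yields a compact $K\subset\subset H$ with $\essinf_K u\geq\tau>0$; for any $x_0\in\Omega\setminus K$ the distance of $x_0$ to the compact set $K$ and to $\partial H$ is positive, and $\lambda_1(B_r(x_0))=r^{-2s}\lambda_1(B_1(0))\to\infty$ as $r\to 0$ (equivalently one could invoke the paper's own bound $\lambda_1(\Omega)\geq C_{N,s}|\Omega|^{-2s/N}$), so all four constraints on $r$ can be met simultaneously; and once $\overline{B_r(x_0)}\cap K=\emptyset$ with $K$ compact you indeed have $K\subset\subset H\setminus\overline{B_r(x_0)}$, which is precisely the hypothesis format of Proposition \ref{hopf}. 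The Lindel\"of covering of $\Omega\setminus K$ combined with the pointwise lower bound on $K\cap\Omega$ then gives $u>0$ a.e.\ in $\Omega$, which is the correct reading of the conclusion at the level of regularity $\cD^s(\Omega)$.
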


\begin{bem}\label{bemhopf}
We emphasize  that Proposition \ref{hopf} remains true for entire
supersolutions. Indeed, one would repeat the same proof by
considering the barrier $
w(x)=\psi_{B_{1}}(x)+\alpha\psi_K(x)$.  In particular
Corollary \ref{strong1} is also  valid for entire supersolutions.
  \end{bem}

\section{Proof of Theorem \ref{main}}\label{overdetermined}
Our objective in this section is to prove Theorem \ref{main} for the case $N\geq2$.  We present an argument based on the fact that $u >0$ in $\Omega$. As we shall see, simple observations shows that after two steps of the moving plane argument, we see that $\Omega$ must be connected.   For the case $N\geq 1$ and general right hand side, we postpone the proof in the section.
\begin{satz}[Theorem \ref{main} for $N\geq 2$]\label{teil1}
 Let $\Omega\subset\R^{N}$, $N\geq 2$, be an open, bounded set such that $\partial \Omega$ is $C^{2}$
 and assume that there is a solution $u\in   C^{s}(\overline{\Omega})$ of
\[
 \hl u =1 \quad\text{ in $\Omega$,}\qquad
  u=0 \quad\text{ in $\R^N\setminus\Omega$.}
\]
If there is a negative real number $c$ such that
\[
 \left(\partial_{\eta}\right)_{s} u\equiv c \quad \text{ on $\partial \Omega$},
\]
then $\Omega$ is a ball and $u=\psi_{\Omega}$, where $\psi_{\Omega}$ is given as in Section \ref{anti-results}.
\end{satz}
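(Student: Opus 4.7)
The plan is to apply the method of moving planes in every direction $e\in S^{1}$. Fix such an $e$ and, for $\lambda\in\R$, introduce the open half-space $H_\lambda=\{x\cdot e>\lambda\}$, the hyperplane $T_\lambda=\partial H_\lambda$, the reflection $Q_\lambda$ of $\R^N$ across $T_\lambda$, and the cap $\Sigma_\lambda=\Omega\cap H_\lambda$. The central object is the antisymmetric comparison function
\[
v_\lambda(x):=u(Q_\lambda x)-u(x),\qquad x\in\R^{N},
\]
which is antisymmetric across $T_\lambda$ and, because $(-\Delta)^s u=1$ in $\Omega$ is invariant under reflections, formally solves $(-\Delta)^s v_\lambda=0$ in $\Sigma_\lambda$. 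The exterior condition $u\equiv 0$ on $\R^N\setminus\Omega$ gives $v_\lambda(x)=u(Q_\lambda x)\geq 0$ for $x\in H_\lambda\setminus\Omega$, so $v_\lambda$ is an entire antisymmetric supersolution in the sense of Section~\ref{anti-results}.

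I would start the procedure from $\lambda$ slightly below $\sup_{x\in\overline\Omega}x\cdot e$: the cap $\Sigma_\lambda$ is small and Proposition~\ref{4-elliptic-max} (with $c\equiv 0$, $\kappa=0$) immediately yields $v_\lambda\geq 0$ on $H_\lambda$. Set
\[
\lambda_0:=\inf\{\lambda\in\R\,:\,v_\mu\geq 0\text{ in }H_\mu\text{ for every }\mu\geq\lambda\}.
\]
By continuity $v_{\lambda_0}\geq 0$ on $H_{\lambda_0}$, and Corollary~\ref{strong1} produces the dichotomy: either $v_{\lambda_0}\equiv 0$ on $H_{\lambda_0}$, or $v_{\lambda_0}>0$ throughout $\Sigma_{\lambda_0}$.

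The heart of the proof, and the main obstacle, is excluding the strict alternative; this is where the overdetermined condition finally enters. Suppose, for contradiction, that $v_{\lambda_0}>0$ in $\Sigma_{\lambda_0}$. By critically of $\lambda_0$ one extracts a limiting boundary point $x_0$ of one of two geometric types: \emph{(i)} an interior tangency, i.e.\ $x_0\in\partial\Omega\cap H_{\lambda_0}$ with $Q_{\lambda_0}x_0\in\partial\Omega$ (the reflected cap becomes internally tangent to $\partial\Omega$), or \emph{(ii)} an orthogonal contact $x_0\in T_{\lambda_0}\cap\partial\Omega$ where $T_{\lambda_0}\perp\partial\Omega$. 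In case (i) the fractional Hopf lemma (Proposition~\ref{hopf}) applied to $v_{\lambda_0}$ at $x_0$ forces $-\liminf_{t\to 0^{+}}v_{\lambda_0}(x_0-t\eta(x_0))/t^{s}<0$, whereas the overdetermined condition gives
\[
(\partial_\eta)_{s} v_{\lambda_0}(x_0)=(\partial_\eta)_{s} u(Q_{\lambda_0}x_0)-(\partial_\eta)_{s} u(x_0)=c-c=0,
\]
the desired contradiction. Case (ii) is analogous but replaces Hopf's lemma by the fractional corner boundary point lemma (Lemma~\ref{lem:max-corner}) announced in the introduction, again contradicting the constancy of $(\partial_\eta)_{s}u$.

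Thus $v_{\lambda_0}\equiv 0$ in $H_{\lambda_0}$, equivalently $u\circ Q_{\lambda_0}\equiv u$ on all of $\R^N$, so $\Omega$ is symmetric across $T_{\lambda_0}$. Here the nonlocal nature of the problem is decisive: the condition $u\equiv 0$ on $\R^N\setminus\Omega$ forces $\R^N\setminus\Omega$ to be $Q_{\lambda_0}$-invariant, so each connected component of $\Omega$ is mapped to another component (or to itself). Running the moving planes in a second, transversal direction, as the authors preview, makes this component-level bookkeeping collapse and forces $\Omega$ to be connected; and running the argument in every direction $e\in S^{1}$ provides a family of symmetry hyperplanes that must all meet at a common centre, so $\Omega=B_R(x_0)$. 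Uniqueness of the solution on the ball then identifies $u$ with $\gamma_{N,s}(R^2-|x-x_0|^2)^s$, completing the proof.
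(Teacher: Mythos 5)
Your overall strategy coincides with the paper's: moving planes run entirely in the fractional framework, the antisymmetric supersolution $v_\lambda$, the weak and strong maximum principles of Section \ref{anti-results}, Hopf's lemma at an interior tangency, a corner lemma at an orthogonal contact, and a second sweep in a transversal direction to deal with disconnectedness. The interior-tangency case and the symmetry conclusion are handled correctly. There is, however, a genuine gap in your case (ii): it is \emph{not} ``analogous'' to case (i), and Lemma \ref{lem:max-corner} by itself contradicts nothing. At a corner point $P_0\in T_{\lambda_0}\cap\partial\Omega$ the identity $(\partial_\eta)_s v_{\lambda_0}(P_0)=c-c=0$ is automatic from antisymmetry and carries no information; the corner lemma only supplies the lower bound $v_{\lambda_0}(t\bar\eta)\geq Ct^{1+s}$ along the diagonal direction $\bar\eta$. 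To close the argument you must produce the matching upper bound $v_{\lambda_0}(t\bar\eta)=o(t^{1+s})$, i.e. show that the overdetermined condition kills $v_{\lambda_0}$ to one order beyond $t^{s}$. This is Lemma \ref{0-derivative} in the paper and is its most delicate step: one writes $u=\delta^{s}\psi$ with $\psi\in C^{0,\alpha}(\overline{\Omega})$ (Ros-Oton--Serra), uses $(\partial_\eta)_s u\equiv c$ to get $\psi\equiv -c$ on $\partial\Omega$, and then performs a second-order Taylor expansion of $\delta$ and of the reflected distance $\bar\delta$ at $P_0$ (exploiting that $T_{\lambda_0}\perp\partial\Omega$ there, so the gradients and the relevant Hessian terms agree) to conclude $\delta^{s}(t\bar\eta)-\bar\delta^{s}(t\bar\eta)=o(t^{1+s})$. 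Without this estimate the orthogonal case is not excluded.

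A second, lesser issue: your disposal of disconnectedness is only a gesture. The paper's argument is concrete: once $v_{\lambda_0}\equiv 0$, the exterior condition forces $u\equiv 0$ off $\tilde U:=\Omega_{\lambda_0}\cup Q_{\lambda_0}(\Omega_{\lambda_0})\cup(T_{\lambda_0}\cap\Omega)$, hence $\Omega=\tilde U$ (this uses $u>0$ in $\Omega$, which follows from Hopf's lemma and which you never establish). If $\tilde U$ had two components, one finds a hyperplane $T'$ perpendicular to $T_{\lambda_0}$ separating them, runs the moving plane in that direction so as to touch one component first, and concludes that $u$ vanishes on the other component, contradicting positivity. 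You should spell this out rather than appeal to ``component-level bookkeeping''.
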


\begin{bem}\label{strongproperties}
We note that by regularity theory    $u\in C^\infty(\O)$.
In addition a nontrivial solution  $u\in \cH^{s}_{0} $ to  $\hl u
=1 \quad\text{in $\Omega$}$ is strictly positive in $\Omega$ by   Hopf's lemma  (see also Remark \ref{bemhopf}) and there must be $c <0$.
\end{bem}

\begin{proof}[Proof of Theorem \ref{teil1}]
  Let $e\in S^{1}$ be fixed and consider $T_{\lambda}:=T_{e,\lambda}:=\{x\in\R^{N}\;:\; x\cdot e=\lambda\}$
  as a hyperplane in $\R^{N}$, which we will continuously move by continuously varying $\lambda$.
  Since $\Omega$ is bounded, denote $l:=\max_{x\in \Omega}x\cdot e$, so that $T_{\lambda}\cap \Omega=\emptyset$ for $\lambda\geq l$.
   Denote $H_{\lambda}:=H_{e,\lambda}:=\{x\in\R^{N}\;:\; x\cdot e>\lambda\}$ and define $\Omega_{\lambda}:=\Omega_{e,\lambda}:=\Omega\cap H_{\lambda}$.
    Let $Q_{\lambda}:=Q_{e,\lambda}$ be the reflection about $T_{\lambda}$ as described in Section \ref{anti-results}
     and denote $\Omega'_{\lambda}:=Q_{\lambda}(\Omega_{\lambda})$, i.e. the reflection of $\Omega_{\lambda}$ about $T_{\lambda}$.
      Since $\partial\Omega$ is $C^{2}$ we have that for $\lambda<l$ but close to $l$,
that $\Omega'_{\lambda}\subset\Omega$. As we decrease $\lambda$, i.e. continue moving $T_{\lambda}$, two possible situations may occur:
\begin{equation}\label{ST12}
\begin{array}{ll}
\textrm{Situation 1:  There is a point $P_{0}\in \partial\Omega\cap \overline{\Omega'_{\lambda}}\setminus T_{\lambda}$ or}\\
\textrm{Situation 2: $T_{\lambda}$ is orthogonal to $\partial\Omega$ at some point $P_0\in \partial\Omega\cap T_{\lambda}$.}\\
\end{array}
\end{equation}

We note, that although $\Omega$ is not necessarily connected, there is no other possibility since $\partial\Omega$ is $C^2$ and $\Omega$ is bounded.
\begin{equation}\label{eq:defl0}
\textrm{ Let $\lambda_{0}$ be the point at which one of these
situations occur for the first time.}
\end{equation}
For simplicity, we put $T=T_{\lambda_{0}}$ and
$H=H_{\lambda_{0}}$. Our aim is to prove that if any of the above
situation occurs, $\Omega$ must be symmetric with respect to the
plane $T$.\\

To prove that the situations yield symmetry, we let $Q$ be the
reflection about $T$ as described in Section \ref{anti-results}. Then
define $Q(x)=:\bar{x}$ and consider the function
\[
 v(x):=u(x)-u(\bar{x})\quad\text{ for $x\in\R^{N}$.}
\]
Since $U:=\Omega'_{\lambda_{0}}\subset\Omega$ we have that $v$ satisfies
\[
\hl v=0 \text{ in $U$}
\]
and
\[
 \begin{aligned}
 v&\geq 0 &&\text{ on $H'\setminus U$;}\\
  v(\bar{x})&=-v(x)&& \text{ for all $x\in \R^{N}$.}
 \end{aligned}
\]
Here $H':=\R^{N}\setminus H$. Thus we have, that $v$ is an entire antisymmetric supersolution
on $U$ with $v\geq 0$ on $H'$ by the weak maximum principle. The strong maximum principle (Corollary  \ref{strong1}) then implies $v\equiv0$ on $\R^{N}$ or $v>0$ in $U$.

  We will show, that $v>0$ in $U$ is not
  possible. This will be separated in to two cases.\\

\noindent \textbf{Case 1)} First assume we are in the first case,
i.e. there is some point $P_{0}\in \overline{\Omega}\cap
\overline{U}\setminus T$.\\
 Note, that we have $P_{0}\in
\partial\Omega\cap \partial U$ due to the choice of $\lambda_0$, we
have $u(P_{0})=0=u(\bar{P_{0}})$, especially $v(P_{0})=0$. Since
$v>0$ in $U$ Hopf's Lemma (Proposition \ref{hopf}) gives, that
$\left(\partial_{\eta}\right)_{s}v(P_{0})<0$, where $\eta$ is the
outernormal at $P_{0}$ on $\partial U$. But since
$\left(\partial_{\eta}\right)_{s}u(P_{0})=c=\left(\partial_{\eta}\right)_{s}u(\bar{P_{0}})$
we must have $\left(\partial_{\eta}\right)_{s}v(P_{0})=0$ which is a
contradiction and thus we cannot be in the first case.\\

\noindent \textbf{Case 2)} Assume  that $v>0$ in $U$ and that $T$ is
orthogonal to
$\partial\Omega$ at a point $P_0\in T\cap\partial\Omega$.\\
Up to translation and rotations, we may assume that $P_0=0$ is
the origin,  $e=e_1$, the interior normal of $\partial\Om$ at the origin is
$e_2$ and $\n^2\delta_\Omega(0)$ is diagonal.
 Without loss of generality, we may also assume  that $\lambda=0$.
\begin{lemma}\label{0-derivative}
We have
\[
v(t\bar{\eta})= o(t^{1+s}),\quad \text{ as $t\to 0$,}
\]
where $\bar{\eta}=(-1,1,0,\dots,0)$.
\end{lemma}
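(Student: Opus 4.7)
My first step is to check that for small $t > 0$, the two points $x_t := t\bar\eta = (-t, t, 0, \ldots, 0)$ and $y_t := \overline{x_t} = (t, t, 0, \ldots, 0)$ both lie inside $\Omega$. This follows from the hypothesis that $0 \in \partial\Omega$ with inner normal $e_2$ and $\partial\Omega \in C^2$: locally $\partial\Omega = \{x_2 = g(x_1, x_3, \ldots, x_N)\}$ with $g(0) = 0$ and $\nabla g(0) = 0$, so $\delta_\Omega(x_t), \delta_\Omega(y_t) = t + O(t^2)$. Consequently $v(t\bar\eta) = u(x_t) - u(y_t)$, a difference of $u$ at two interior points converging to $0 \in \partial\Omega$.

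Next, I use the factorization $u = \delta^s \phi$ (with $\delta := \delta_\Omega$), where $\phi \in C^{0,\alpha}(\overline\Omega)$ by the boundary regularity result recalled before Proposition \ref{hopf}. The overdetermined condition $(\partial_\eta)_s u \equiv c$ forces $\phi|_{\partial\Omega} \equiv -c$. I split
\begin{equation*}
v(t\bar\eta) \;=\; \bigl[\delta(x_t)^s - \delta(y_t)^s\bigr]\phi(x_t) \;+\; \delta(y_t)^s\bigl[\phi(x_t) - \phi(y_t)\bigr].
\end{equation*}

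The geometric hypotheses---$\nabla^2\delta(0)$ diagonal and $\partial\Omega \in C^2$---ensure that $\partial\Omega$ is locally the graph $x_2 = \tfrac{1}{2}\sum_{j \ne 2} a_j x_j^2 + O(|x'|^3)$, which is symmetric in $x_1 \mapsto -x_1$ to second order. Consequently $\delta(x) - \delta(\bar x) = O(|x|^3)$ for $x$ near $0$ inside $\Omega$, and in particular $\delta(x_t) - \delta(y_t) = O(t^3)$. A mean value bound then yields $\delta(x_t)^s - \delta(y_t)^s = O(t^{s+2}) = o(t^{s+1})$. Since $\phi$ is bounded, the first summand of the decomposition is $o(t^{s+1})$.

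The main obstacle is the second summand: one needs $|\phi(x_t) - \phi(y_t)| = o(t)$, but the bare Hölder estimate with $\alpha \in (0, 1)$ only provides $O(t^\alpha)$. To surmount this I would use that $\phi \equiv -c$ on all of $\partial\Omega$ together with the orthogonality of $T$ to $\partial\Omega$ at $0$---so that the nearest-boundary-point projections $x_t^*, y_t^*$ tend to $0$ and are mutual reflections to second order---combined with a finer boundary expansion of $u$ near the corner $0$. Such an expansion can be produced by a barrier comparison with the explicit ball solutions $\psi_B(x) = \gamma_{N,s}(r^2 - |x - x_0|^2)^s$ on inscribed balls tangent to $\partial\Omega$ at points near $0$ (for which $\psi_B/\delta_B^s$ is smooth), or by bootstrapping the equation $\hl u = 1$ via transmission-type interior/boundary regularity; either route should yield the cancellation needed to reach $|\phi(x_t) - \phi(y_t)| = o(t)$, and hence $v(t\bar\eta) = o(t^{s+1})$.
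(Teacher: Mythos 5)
Your setup and your treatment of the first summand coincide with the paper's argument: one writes $u=\delta^s\psi$ with $\psi\in C^{0,\alpha}(\overline{\Omega})$ and $\psi\equiv -c$ on $\partial\Omega$ (Ros-Oton--Serra), and then Taylor-expands $\delta$ and $\bar\delta$ at the origin, using $\nabla\delta(0)=e_2$, $\nabla\delta(0)\cdot\bar\eta=\nabla\bar\delta(0)\cdot\bar\eta$ and the diagonality of $\nabla^2\delta(0)$, to get $\delta(t\bar\eta)-\bar\delta(t\bar\eta)=o(t^2)$ and hence $\delta^s(t\bar\eta)-\bar\delta^s(t\bar\eta)=o(t^{1+s})$. (Your stronger claim $\delta(x)-\delta(\bar x)=O(|x|^3)$ would need more than $C^2$ regularity of $\partial\Omega$, but the $o(t^2)$ coming from the $C^2$ Taylor expansion already suffices for this term.)

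The proposal does not, however, prove the lemma: the second summand $\delta(y_t)^s\,[\phi(x_t)-\phi(y_t)]$ is precisely where you stop, and your final paragraph is a list of candidate strategies (``either route should yield the cancellation needed'') rather than an argument. Since $|x_t-y_t|=2t$ and $\delta(y_t)^s\sim t^s$, even Lipschitz continuity of $\phi$ would only give $O(t^{1+s})$ rather than $o(t^{1+s})$; what is required is a genuine cancellation between $\phi(x_t)$ and $\phi(y_t)$ exploiting that the two points are reflections of one another about a hyperplane meeting $\partial\Omega$ orthogonally, and none of your suggested routes is carried far enough to produce it. For comparison, the paper disposes of this in one line: it writes $\psi(t\bar\eta)=-c+o(1)=\bar\psi(t\bar\eta)$ and factors $v(t\bar\eta)=[\delta^s(t\bar\eta)-\bar\delta^s(t\bar\eta)]\,(c+o(1))$, thereby absorbing the discrepancy between the two values of $\psi$ into a multiplicative $o(1)$ on the already small difference of the distance functions; read literally, that step presupposes exactly the control on $\phi(x_t)-\phi(y_t)$ that you could not extract from the H\"older estimate. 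So you have correctly located the sensitive point of the lemma, but identifying the obstacle is not the same as overcoming it: as written, your proof is incomplete at the decisive step.
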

\begin{lemma}\label{lem:max-corner}
Let $\Omega\subset \R^{N}$, $N\geq2$ be an open bounded set with
$C^{2}$ boundary such that  the origin $0\in\partial\Omega$. Assume that the hyperplane  $\{x_1=0\}$ is orthogonal to $\partial
\Omega$ at   $0$. Let
$D\subset\Omega$ be an open set with $C^{2}$ boundary and  symmetric    about $\{x_1=0\}$.
Let $D^{\ast}:=D\cap \{x_{1}<0\}$. Let $c\in L^{\infty}(D^\ast)$ and
$w$ be an antisymmetric supersolution of
\[
 \begin{aligned}
 \hl w &\geq c(x)w && \text{ in $D^{\ast}$;}\\
 w&\geq 0 &&\text{ in $\{x_{1}<0\} $;}\\
 w&>0 && \text{ in $D^{\ast}$.}
 \end{aligned}
\]
 Then letting
$\bar\eta=(-1,1,0\ldots,0)$, there exists $C,t_0>0$ depending only
on $\Omega,N,s$ such that
\[
w(t\bar{\eta})\geq C t^{1+s}\qquad \forall t\in(0,t_0).
\]
\end{lemma}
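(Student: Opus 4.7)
My plan is to construct an antisymmetric barrier whose value along $\bar\eta$ decays precisely like $t^{1+s}$, and then compare it with $w$ using the weak maximum principle for antisymmetric supersolutions (Proposition~\ref{4-elliptic-max}).

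\textbf{Setup and barrier.} Since $\{x_1=0\}$ is orthogonal to $\partial\Omega$ at $0$ and $D\subset\Omega$ is symmetric about $\{x_1=0\}$ with $C^2$ boundary, the tangent hyperplane to $\partial D$ at $0$ agrees with that of $\partial\Omega$, namely $\{x_2=0\}$. I choose $R>0$ small enough that the ball $B:=B_R((0,R,0,\ldots,0))$, which is symmetric about $\{x_1=0\}$ and tangent to $\partial D$ at the origin, lies in $D$ and satisfies $\|c\|_{L^\infty(D^*)}<\lambda_1(B\cap\{x_1<0\})$. Fix a compact set $K\subset D^*\setminus\overline B$ with positive measure; because $w>0$ on $D^*$, $m:=\essinf_K w>0$. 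The barrier is
\[
\phi(x):=-x_1\,\psi_B(x)+\alpha\bigl(1_K(x)-1_{Q(K)}(x)\bigr),
\]
where $Q$ is the reflection across $\{x_1=0\}$ and $\alpha>0$ is to be fixed later. Since $\psi_B$ is symmetric across $\{x_1=0\}$, $\phi$ is antisymmetric and nonnegative on $\{x_1<0\}$.

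\textbf{Subsolution estimate.} The product formula for $\hl$, combined with $\hl\psi_B\equiv 1$ on $B$, yields for $x\in B$
\[
\hl(-x_1\psi_B)(x)=-x_1+c_{N,s}\int_B\frac{(y_1-x_1)\psi_B(y)}{|x-y|^{N+2s}}\,dy,
\]
and the symmetries of $\psi_B$ and $B$ across $\{x_1=0\}$ force the right-hand side to be odd in $x_1$. Together with the smoothness of $\psi_B$ in the interior of $B$, a Taylor expansion in $x_1$ then gives $|\hl(-x_1\psi_B)(x)|\le C_0|x_1|$ for $x\in B\cap\{x_1<0\}$, with $C_0=C_0(N,s,R)$. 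On the other hand, using $|x-\overline y|^2-|x-y|^2=4x_1y_1>0$ (as $x_1,y_1<0$) and the positive separation between $K$ and $\{x_1=0\}$, an expansion of the kernel yields
\[
\hl(1_K-1_{Q(K)})(x)\le -c_1|x_1|\qquad\text{on }B\cap\{x_1<0\},
\]
for some $c_1=c_1(N,s,B,K)>0$. Combining, $\hl\phi(x)\le(C_0-\alpha c_1)|x_1|$, so that choosing $\alpha$ with $\alpha c_1\ge C_0+\|c\|_\infty\|\psi_B\|_\infty$ gives $\hl\phi\le c(x)\phi$ on $B\cap\{x_1<0\}$.

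\textbf{Comparison and conclusion.} Set $\varepsilon:=m/\alpha$. Then $\varepsilon\phi=\varepsilon\alpha\le m\le w$ on $K$, while $\phi=0\le w$ on the rest of $\{x_1<0\}\setminus B$; together with antisymmetry this makes $w-\varepsilon\phi$ an entire antisymmetric supersolution of $\hl u=c(x)u$ on $B\cap\{x_1<0\}$. Proposition~\ref{4-elliptic-max} (applicable by the smallness of $R$) yields $w\ge\varepsilon\phi$ on $B\cap\{x_1<0\}$. For $0<t<t_0:=R/2$ the point $t\bar\eta$ lies in $B\cap\{x_1<0\}\setminus K$, so
\[
w(t\bar\eta)\ge\varepsilon\phi(t\bar\eta)=\varepsilon t\,\psi_B(t\bar\eta)=\varepsilon\gamma_{N,s}\,t\,(2Rt-2t^2)^s\ge\varepsilon\gamma_{N,s}R^s\,t^{1+s},
\]
which is the claimed estimate. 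The hard part is the linear bound $|\hl(-x_1\psi_B)(x)|\le C_0|x_1|$: even though its oddness in $x_1$ forces vanishing on $\{x_1=0\}$, the origin lies on $\partial B$ where $\psi_B$ is only $C^s$, and one must exploit that the nonsmoothness of $\psi_B$ sits on $\{x_2=0\}$, which is transverse to $\{x_1=0\}$, in order to upgrade a merely $s$-H\"older rate to a linear one.
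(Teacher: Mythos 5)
Your plan mirrors the paper's: build an antisymmetric barrier of the form $-x_1$ times a radial profile that solves $\hl\psi_B=1$ on an interior ball $B$, add a ``push'' term supported away from $B$ to make it a subsolution, invoke the weak maximum principle (Proposition~\ref{4-elliptic-max}) and read off the $t^{1+s}$ decay of the barrier along $\bar\eta$. Two local differences: you use antisymmetrized indicators $\alpha(1_K-1_{Q(K)})$ (no factor $-x_1$) in place of the paper's $-x_1\,\alpha(d^1+d^2)$ built from truncated distance functions to the auxiliary balls $B^1,B^2$; and your estimate $\hl(1_K-1_{Q(K)})\le -c_1|x_1|$ on $B\cap\{x_1<0\}$ is a legitimate variant of the computation the paper does for $I(x)$ (both hinge on the kernel comparison $|x-\bar y|^2-|x-y|^2=4x_1y_1>0$ with $y_1$ bounded away from $0$). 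That part of the proposal is fine.

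The genuine gap is exactly the one you flag at the end: the linear bound $|\hl(-x_1\psi_B)(x)|\le C_0|x_1|$ on $B\cap\{x_1<0\}$. Oddness in $x_1$ only gives vanishing on $\{x_1=0\}$, and the ``Taylor expansion'' step needs a uniform Lipschitz bound in $x_1$ up to the corner point $0\in\partial B\cap\{x_1=0\}$, where $\psi_B$ is only $C^s$; interior smoothness of $\psi_B$ does not deliver this, and neither does the transversality heuristic as stated. In general $\hl u$ for $u\in C^s(\overline B)\cap\cH^s_0(B)$ need not be differentiable up to $\partial B$, so the claim is genuinely nontrivial. The paper does not try to derive it: it invokes Dyda's explicit computation \cite[Theorem 1, Table 3]{D12}, which gives the exact identity $\hl\bigl(x_1(R^2-|x-Re_2|^2)_+^s\bigr)=C_{N,s}R^{-1}x_1$ on $B$, from which the linear bound is immediate. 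To close your argument you should either import that formula or supply a genuine proof of the $C^1$-up-to-the-boundary regularity of $\hl(x_1\psi_B)$ near the corner, which is precisely what you have postponed.
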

\noindent
Then applying  Lemma \ref{0-derivative} and \ref{lem:max-corner}
(see the proofs below),  we reach a contradiction. Therefore $v\equiv 0$ as
desired.\\

 If $v\equiv 0$ on $\R^{N}$, then $u\equiv 0$ on $\R^{N}\setminus \tilde{U}$, where $\tilde{U}=U\cup Q_{\lambda_0}(U)\cup (T\cap \Omega)$.  This implies that  $\Omega=\tilde{U}$ yielding  symmetry about $T$. It is then clear that $\tilde{U}$ might have many components  lined up along $T$.
\begin{equation}\label{eq:mantyComp}
\textrm{Assume by contradiction that there are two   connected components  $\tilde{U}_1$ and $\tilde{U}_2$.}
\end{equation}
 \textbf{Observation:} There exists a plane $T'$ perpendicular to $ T$ and  separating  $\tilde{U}_1$ and $\tilde{U}_2$. Otherwise one surround the other (recall that they cannot meet at any boundary points by $C^2$ regularity of $\partial\tilde{U}$) contradicting the minimality of $\lambda_0$.\\

We now move the plane  $T'$ touching, say, $\tilde{U}_1$ first. This leads to  property \eqref{eq:defl0} with some  $\lambda_1$ with  direction $e^1$  and $ T'=T_{\lambda_1}$.
Then the same argument as a above yields
 $$
 \tilde{U}=\tilde{U}_1=\tilde{U}\cup Q_{\lambda_1}(\tilde{U})\cup (T_{\lambda_1}\cap \tilde{U}).
 $$
 By symmetry of $u$ with respect to   $T_{\lambda_1}$ and since $u=0$ in $\R^N\setminus \tilde{U}_1$, we deduce  that  $u$ vanish in $\tilde{U}_2$ which is in contradiction with the fact that $u$ is positive in $\Omega$. Hence $\Omega=\tilde{U}$ is connected with $C^2$ boundary.\\

\noindent
Restarting the moving plane process, we conclude that $\Omega$ is symmetric with respect to all planes for which Situation 1 and/or Situation 2 occur for a first time. We then conclude that $\Omega$ must be a ball.
\end{proof}

We observe that Lemma \ref{0-derivative}  states that "derivatives of order $1+s$"
of $v$ vanish at  the origin.
\begin{proof}[Proof of Lemma \ref{0-derivative}]
Thanks to  \cite[Theorem 1.2]{RS12}, we can write
$$
u(x)=\delta^s(x)\psi(x),
$$
where $\psi(x)\in C^{0,\a}(\ov{\Om})$ for some $\alpha \in (0,1)$ (recall that $\d=\d_\Omega$ is the distance function to $\partial\Omega$). It is clear from our hypothesis that
\begin{equation}
\psi(x)=-c\quad\forall x\in\de\Om.
\end{equation}
Put $\bar{u}(x)=u(\bar{x})=u(-x_1,x_2,\dots,x_N)$,
$\bar\delta(x)=\delta(\bar x)$ and $\bar\psi(x)=\psi(\bar x)$. By continuity, we have
$$
\psi(t\bar{\eta})=-c+o(1)=\bar{\psi}(t\bar{\eta}), \quad\text{ as
$t\to 0$.}
$$
Then, using \eqref{missing assumption}, we have
\begin{equation}\label{eq:u-baru}
	\begin{split}
v(t\bar{\eta})=u(t\bar{\eta})-\bar{u}(t\bar{\eta})&=[\delta^s(t\bar{\eta})-\bar{\delta}^s(t\bar{\eta})]\psi(t\bar{\eta}) +\delta^s(t\eta)[\psi(t\bar{\eta})-\psi(t\eta)]\\
&=[\delta^s(t\bar{\eta})-\bar{\delta}^s(t\bar{\eta})](c+o(1))+o(t^{1+s}),
\quad\text{ as $t\to 0$.}
\end{split}
\end{equation}
By Taylor expansion, we have
$$
\delta(t\bar{\eta})=\delta(0)+\n\delta(0)\cdot(t\bar{\eta})+\frac{1}{2}
\n^2\delta(0)[(t\bar{\eta})]\cdot (t\bar{\eta})+o(t^2), \quad\text{
as $t\to 0$}
$$
and
$$
\bar{\delta}(t\bar{\eta})=\delta(0)+\n\bar\delta(0)\cdot(t\bar{\eta})+\frac{1}{2}
\n^2\bar\delta(0)[(t\bar{\eta})]\cdot (t\bar{\eta})+o(t^2),
\quad\text{ as $t\to 0$.}
$$
 In addition, since  $e_2=\n\d(0)$ is the normal direction, $\partial_{x_i}\d(0)=0$ for all $i\neq
 2$. Therefore
$$
\nabla\delta(0)\cdot\bar\eta=\nabla \bar\delta(0)\cdot\bar
\eta=e_2\cdot\bar\eta=1.
$$
Since $\n^2\delta(0)$ is diagonal, it is plain that
$$
\n^2\delta(0)[\bar{\eta}]\cdot \bar{\eta}=\n^2\bar\delta(0)[\bar{\eta}]\cdot \bar{\eta}=\n^2\delta(0)[e_2]\cdot e_2+\n^2\delta(0)[e_1]\cdot e_1 .
$$
 It follows that
$$
\delta^s(t\bar{\eta})=t^s(1+\frac{s}{2}
\n^2\delta(0)[\bar{\eta}]\cdot (t\bar{\eta})+o(t)), \quad\text{ as
$t\to 0$}
$$
and
$$
\bar{\delta}^s(t\bar{\eta})=t^s(1+\frac{s}{2}
\n^2\delta(0)[\bar{\eta}]\cdot (t\bar{\eta})+o(t)), \quad\text{ as
$t\to 0$.}
$$
We then conclude that
$$
\delta^s(t\bar{\eta})-\bar{\delta}^s(t\bar{\eta})= o(t^{1+s}),
\quad\text{ as $t\to 0$.}
$$
This together with \eqref{eq:u-baru}  proves the claim.
\end{proof}

We also observe that Lemma \ref{lem:max-corner} can be seen  as the Serrin's corner boundary point
lemma.
\begin{proof}[Proof of Lemma \ref{lem:max-corner}]
Let $R>0$ small so that  $B:=B_R(Re_2)\subset \O$ and $\de
B_R(Re_2)\cap \de \O=\{0\} $. Put
$$
K=  B_R(Re_2)\cap \{x_1<0\}.
$$

Define  $B^2= B_R(4R\eta)$ and  $B^1= B_R(4R\bar{\eta})$, where
$\bar{\eta}=e_2-e_1$. From now on we will consider $R$ small such
that $ B^1  \cup B^2\subset \subset D $ (see Figure 2 below):

   \begin{center}%
\begin{figure}[htb]
   \begin{center}%
  \fbox{\begin{minipage}{8cm}%
   \begin{center}%
     \includegraphics[width=\textwidth]{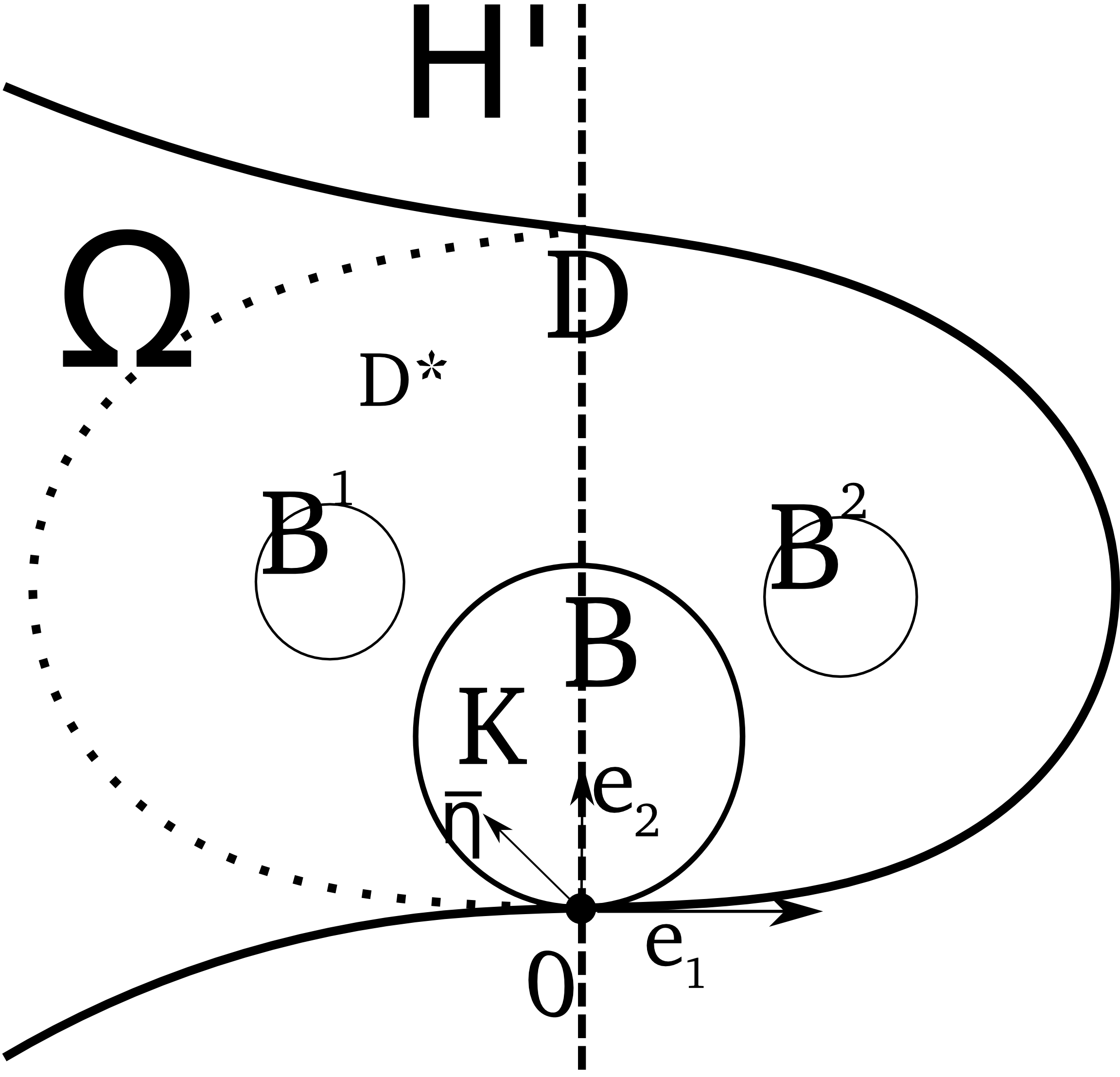}%
   \end{center}%
  \end{minipage}}%
\caption{  $B^2=Q(B^1)$, where $Q$ is the reflection at $\partial H'$}%
   \end{center}%
\end{figure}
   \end{center}%

We next consider the truncated distance
functions to the boundary of these balls denoted by
$$
d^2(x)=(R-|x-4R\eta|)_+,\qquad  d^1(x)=(R-|x-4R\bar{\eta}|)_+.
$$
As in Section \ref{anti-results} we use
$$
\phi_{B}(x)=(R^2-|x-Re_2|^2)_+^s,
$$
and for $\a >0$ (to be chosen later),  we consider the   barrier
$$
h(x)=-x_1[\phi_R(x)+\a (d^1(x)+ d^2(x)) ].
$$
Note that $h(x)=-h(\bar{x})$ and $h\in C^{1,1}( B)\cap \cD^{s}(K)$.
Using  \cite[Theorem 1 + Table 3, pp.549]{D12}, together  with a
scaling and translation,
\begin{equation}\label{eq:Dsx1phiR}
 |\hl (x_1\phi_R(x))|=
|C_{N,s} R^{-1} x_1| \leq C |x_1| \qquad\forall x\in K,
\end{equation}
where here and in the following $C$ is a positive  constant
(possibly depending on $R$, $N$, $s$) but never on $\alpha$. Now we
put
\[
I(x):= -\hl\left[x_{1}(d^1(x)+d^2(x))\right].
\]
Then for  $x\in K$ we have
\begin{align*}
I(x)&=- P.V. \int_{\R^{N}}\frac{-y_{1}(d^1(y)+d^2(y))}{|x-y|^{N+2s}}\ dy\\
&=\int_{B^1}y_{1}d^1(y)\left(|x-y|^{-N-2s}-|x-\bar{y}|^{-N-2s}\right)\ dy\\
&=\int_{B^1}y_{1}d^1(y)|x-y|^{-N-2s}\left(1-\left(\frac{|x-y|}{|x-\bar{y}|}\right)^{N+2s}\right)\
dy,\\
&=\int_{B^1}y_{1}
d^1(y)|x-y|^{-N-2s}\left(1-\left(\frac{|x-y|^2}{|x-y|^2+4x_{1}y_{1}}\right)^{(N+2s)/2}\right)\,dy.
\end{align*}
Observe that by construction,
$$
|x-y|>R \qquad \forall x\in \cap B_R(Re_2),\quad \forall y\in  B^1.
$$
Using this and the fact that  the map
\[
a\mapsto 1-\left(\frac{a}{a+d}\right)^{k}
\]
is strictly monotone decreasing in $a$ for all $d,k>0$,
 we therefore get
\begin{align*}
I(x)& \leq \int_{B^1}y_{1}
d^1(y)|x-y|^{-N-2s}\left(1-\left(\frac{R^2}{R^2+4x_{1}y_{1}}\right)^{(N+2s)/2}\right)\,dy.
\end{align*}
Hence we have (recall that $y_1<0$)
\begin{align*}
I(x)& \leq - C \int_{B^1}
\left(1-\left(\frac{R^2}{R^2+4x_{1}y_{1}}\right)^{(N+2s)/2}\right)\,dy\\
%&=
% - C \int_{B^1}
%\left(1- \left(\frac{1}{1+4x_{1}y_{1}/R^2}\right)^{(N+2s)/2}\right)\,dy\\
&= - C \int_{B^1}
\left(1-\left(1-\frac{4x_{1}y_{1}/R^2}{1+4x_{1}y_{1}/R^2}\right)^{(N+2s)/2}\right)\,dy.
\end{align*}
Using the elementary inequality
$$
  (1-t)^\b\leq 1-
t\qquad \textrm{ for } t\in(0,1),\,\,\b >1
$$
we get, for all $x\in K$,
\begin{align*}
%I(x)& \leq- C \int_{B^1}
%\left(1- \left(1-\frac{4x_{1}y_{1}/R^2}{1+4x_{1}y_{1}/R^2}\right)^{(N+2s)/2}\right)\,dy\\
I(x) & \leq - C
\int_{B^1}  \frac{4x_{1}y_{1}/R^2}{1+4x_{1}y_{1}/R^2} dy \leq - C |x_1|,
\end{align*}
where we have used the fact that $|y_1|$ is bounded away from 0 as
long as $y\in B^1$ and $|x_1|\leq 5R$. Combining this with \eqref{eq:Dsx1phiR}, we
infer
$$
\hl h(x)-c(x) h(x)\leq (C -\a C) |x_1|\qquad \forall x\in K.
$$
Hence we can choose $\alpha$ so that $\hl h-c(x) h\leq0$ in $K$.  Since
also we can find a positive constant  $M>0$ so that $w\geq M h$ in $\overline{B^1}$, we get immediately $ w- M h\geq0 $ in
$\{x_1<0\}\setminus K$. We then deduce from the weak maximum principle
that $w\geq M h$ in $ D^*$. Now since
$$
h(t\bar{\eta})=t^{1+s}(2R-4t^2),
$$
 the proof follows immediately because $t\bar{\eta}\in D^*$ for $t>0$ small.
\end{proof}

\section{Generalization}\label{s:Gen}

\begin{satz}[Theorem \ref{main2} for $N\geq2$]\label{teil2}
Let $c\in\R$ and  $\Omega\subset\R^{N}$, $N\geq 2$, be an open, bounded set with  $C^{2}$ boundary.
   Furthermore, let $f:\R\to\R$ be locally Lipschitz and assume that there is a function $u\in C^s(\overline{\Omega})$, which is nonnegative and nontrivial in $\O$ and fulfills
\begin{equation}
\left\{\begin{aligned}
 \hl u &= f(u) &&\text{ in $\Omega$;}\\
 u&=0  &&\text{ in $\R^{N}\setminus\Omega$;}\\
 \left(\partial_\eta \right)_{s} u&=c  &&\text{ on $\partial \Omega$.}
\end{aligned}\right.
\end{equation}
 Then $\Omega$ is a ball and $u>0$ in $\Omega$.
\end{satz}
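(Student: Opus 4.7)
The plan is to adapt the moving-plane scheme from the proof of Theorem~\ref{teil1}. The key new point is that the antisymmetric difference $v_\lambda(x):=u(x)-u(Q_\lambda x)$ no longer satisfies $\hl v_\lambda=0$; instead, locally Lipschitz continuity of $f$ together with the boundedness of $u$ gives
\begin{equation*}
 \hl v_\lambda(x) = c_\lambda(x)\, v_\lambda(x)\quad\text{in }\Omega'_\lambda,\qquad c_\lambda(x):=\frac{f(u(x))-f(u(Q_\lambda x))}{u(x)-u(Q_\lambda x)}
\end{equation*}
(extended by $0$ where the denominator vanishes), with $\|c_\lambda\|_\infty \leq L$ uniformly in $\lambda$ for some Lipschitz constant $L$ of $f$ on $[0,\|u\|_\infty]$. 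Proposition~\ref{4-elliptic-max} is then only applicable when $\lambda_1(\Omega'_\lambda)>L$, so the single-shot use made in Theorem~\ref{teil1} is no longer admissible. The main obstacle is precisely this, and I would resolve it by the classical continuous sliding of $T_\lambda$ from $\lambda=l$ downwards combined with a narrow-strip application of Proposition~\ref{4-elliptic-max}.

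\textbf{Sliding down and narrow-strip continuation.} Fix $e\in S^{1}$ and retain the notation $T_\lambda,H_\lambda,\Omega_\lambda,\Omega'_\lambda,Q_\lambda,l$ and the critical value $\lambda_0$ from Theorem~\ref{teil1}. The same elementary case analysis (using only $u\geq 0$ and $u\equiv 0$ on $\R^N\setminus\Omega$) shows $v_\lambda\geq 0$ on $H'_\lambda\setminus\Omega'_\lambda$, so $v_\lambda$ is an entire antisymmetric supersolution on $\Omega'_\lambda$. For $\lambda$ close to $l$ the measure $|\Omega'_\lambda|$ is arbitrarily small, so the spectral bound $\lambda_1(\Omega'_\lambda)\geq C_{N,s}|\Omega'_\lambda|^{-2s/N}$ exceeds $L$ and Proposition~\ref{4-elliptic-max} delivers $v_\lambda\geq 0$ in $\Omega'_\lambda$. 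Set
\begin{equation*}
 \lambda^*:=\inf\{\lambda\in[\lambda_0,l]\,:\,v_\mu\geq 0\text{ in }\Omega'_\mu\text{ for every }\mu\in[\lambda,l]\}.
\end{equation*}
I claim $\lambda^*=\lambda_0$. Otherwise, continuity gives $v_{\lambda^*}\geq 0$ in $\Omega'_{\lambda^*}$, and Corollary~\ref{strong1} yields either $v_{\lambda^*}\equiv 0$ (which makes $\Omega$ symmetric about $T_{\lambda^*}$ and forces $\lambda^*=\lambda_0$, a contradiction) or $v_{\lambda^*}>0$ in $\Omega'_{\lambda^*}$. In the latter case, pick a compact set $K\subset\subset\Omega'_{\lambda^*}$ with $\lambda_1(\Omega'_{\lambda^*}\setminus K)>L$ and set $\varepsilon:=\inf_K v_{\lambda^*}>0$. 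Continuity in $\lambda$ preserves $v_\lambda\geq \varepsilon/2$ on $K$ and $\lambda_1(\Omega'_\lambda\setminus K)>L$ for $\lambda$ slightly below $\lambda^*$, and Proposition~\ref{4-elliptic-max} applied to $v_\lambda$ on the thin open set $\Omega'_\lambda\setminus K$ (with $v_\lambda\geq 0$ on both $K$ and $H'_\lambda\setminus\Omega'_\lambda$) yields $v_\lambda\geq 0$ throughout $\Omega'_\lambda$, contradicting the minimality of $\lambda^*$.

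\textbf{Symmetry, positivity and ball.} At $\lambda_0$, Corollary~\ref{strong1} yields $v_{\lambda_0}\equiv 0$ or $v_{\lambda_0}>0$ in $U:=\Omega'_{\lambda_0}$; the positive alternative is excluded exactly as in Theorem~\ref{teil1}, by Proposition~\ref{hopf} combined with $(\partial_\eta)_s u\equiv c$ in Situation~1 of~\eqref{ST12}, and by Lemma~\ref{lem:max-corner} combined with Lemma~\ref{0-derivative} in Situation~2. Both arguments carry over: the boundary regularity $u/\delta^s\in C^{0,\alpha}(\overline\Omega)$ of \cite[Theorem 1.2]{RS12} still holds because $f(u)\in L^\infty(\Omega)$, and the corner barrier built in the proof of Lemma~\ref{lem:max-corner} absorbs the bounded zero-order coefficient $c_{\lambda_0}$ into the barrier parameter. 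Hence $v_{\lambda_0}\equiv 0$ and $\Omega$ is symmetric about $T_{\lambda_0}$. To conclude, the positivity $u>0$ in $\Omega$ follows by writing $\hl u=f(0)+b(x)u$ with $b\in L^\infty$ and applying the strong maximum principle (Corollary~\ref{strong1} extended via Remark~\ref{bemhopf}) on each connected component of $\Omega$, together with the Hopf-type behaviour $u\sim -c\,\delta^s$ forced by $(\partial_\eta)_s u\equiv c$ near $\partial\Omega$, which guarantees that $u\not\equiv 0$ on any component. With $u>0$ in hand, the connectedness step of Theorem~\ref{teil1} applies verbatim, and running the procedure in every direction $e\in S^{1}$ yields a hyperplane of symmetry in each direction, so $\Omega$ must be a ball.
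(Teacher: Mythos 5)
Your continuous-sliding argument (with the narrow-set maximum principle to move past $\lambda^*$) and the exclusion of $v_{\lambda_0}>0$ via Proposition~\ref{hopf} and Lemmas~\ref{0-derivative}, \ref{lem:max-corner} match the paper's argument for establishing $v_{\lambda_0}\equiv 0$ in every direction $e$. The significant divergence — and the gap — is in the end game. The paper does \emph{not} attempt to prove $u>0$ in $\Omega$ directly at this stage; instead it records the strict monotonicity $u(x)>u(Q_{\lambda,e}x)$ on $\Omega'_{\lambda,e}$ for $\lambda\in(\lambda_0,l)$ in every direction, observes that together with the symmetry at $\lambda_0$ every half-space is dominant or subordinate for $u$, and then invokes Proposition~\ref{Tobias} from the Appendix: $u$ is radially symmetric and decreasing about some center, hence $\supp(u)$ is a ball. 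Finally a separating-plane contradiction shows $\overline\Omega=\supp(u)$, and the strict monotonicity gives $u>0$ in $\Omega$ as a byproduct. Your shortcut — writing $\hl u=f(0)+b(x)u$ and applying Corollary~\ref{strong1}/Remark~\ref{bemhopf} component by component — does not work in general: $u$ is an entire \emph{super}solution of $\hl u=b(x)u$ only when $f(0)\geq 0$, and the theorem makes no such sign assumption. For $f(0)<0$, $u$ is merely a subsolution of $\hl u=b(x)u$, and neither the strong maximum principle nor the Hopf lemma (Remark~\ref{bemhopf}) applies; likewise, the argument that $(\partial_\eta)_s u\equiv c$ forces $u\not\equiv 0$ on every component is empty when $c=0$, which is not excluded a priori. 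So your positivity claim is unproved, and with it the "connectedness step of Theorem~\ref{teil1} applies verbatim" step collapses. The paper's detour through Proposition~\ref{Tobias} is precisely what avoids assuming anything about the sign of $f(0)$ or of $c$.
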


\begin{proof}
Let $e\in S^1$ and  consider $\lambda_{0}$ as defined
in \eqref{eq:defl0} and $U:=\Omega'_{\lambda_0}\subset \Omega$ as before.
We define $v_{\lambda_0}(x):=u(x)-u(\bar{x})$ for all $x\in\R^N$, where we use the notation as usual, i.e. $Q_{\lambda,e}(x)=:\bar{x}$ and $Q_{\lambda,e}$ is the reflection of $T=T_{\lambda,e}$. Then $v_{\lambda_0}$ solves
\[
 \hl v_{\lambda_0}\geq -c_f(x) v_{\lambda_0} \qquad\text{in $U$,}
\]
 where
\[
 c_f(x):=\left\{\begin{aligned}
                 \frac{f(u(x))-f(u(\bar{x}))}{u(x)-u(\bar{x})}, &&\text{ if $u(x)\neq u(\bar{x})$;}\\
0,&&\text{ if $u(x)=u(\bar{x})$.}
                \end{aligned}
 \right.
\]
Let $L_f$ be the Lipschitz constant of $f$ for $\cB=[0,\|u\|_{L^{\infty}(\R^{N})}]$. Then we have $\|c\|_{L^{\infty}(U)}\leq L_f$.   Here, we cannot directly apply the maximum principle to get $v_{\lambda_0}\geq 0$ in $H'$ as in the previous section because $L_f$ might be large. However, by  using  the moving plane method, we can prove that
\begin{equation}\label{eq:vgeq0Hp}
  v_{\lambda_0}\geq 0 \qquad \text{ on $H'$;}
\end{equation}
To this end, we observe that for $\lambda\in (\lambda_0,l)$ but close to $l$   we have $L_f\leq
\lambda_1(\Omega'_{\lambda})  $ so that $u(x)-u(Q_{\lambda,e}(x))\geq 0$
in $\Omega'_{\lambda}$ by the weak  maximum principle. Now by the strong maximum principle
$$
\textrm{ $(S_{\lambda})$ \quad  $v_{\lambda}(x):=u(x)-u(Q_{\lambda}(x))>0\quad$ for all $x\in \Omega'_{\lambda}$}
$$
as $u$ is nontrivial.
We let
\[
 \tilde{\lambda}:=\inf\{\lambda>\lambda_{0}\;:\; (S_{\mu}) \text{ holds for all $\lambda>\mu$}\}.
\]
Our aim is to prove that $\tilde{\lambda}=\lambda_{0}$. Assume by contradiction that  $\tilde{\lambda}>\lambda_{0}$. Then by continuity and the strong maximum principle we have that $(S_{\tilde{\lambda}})$ holds. Since $\tilde{\lambda}>\lambda_{0}$, there is by continuity $\epsilon>0$ such that $\Omega'_{\tilde{\lambda}-\epsilon}\subset\Omega$. Choose an open set $\Pi\subset \Omega'_{\tilde{\lambda}-\epsilon}$ such that $\{v\leq 0\}\cap  \Omega'_{\tilde{\lambda}-\epsilon}\subset \Pi$ and we may assume that  $|\Pi|$ is small by making $\epsilon$ possibly smaller. The maximum principle then can be applied to $\Pi$ giving $v_{\tilde{\lambda}-\epsilon}>0$ in $\Pi$ (as before) and thus $S_{\tilde{\lambda}-\epsilon}$ holds in contradiction to the choice of $\tilde{\lambda}$. Thus $\tilde{\lambda}=\lambda_0$. Hence \eqref{eq:vgeq0Hp} is proved.
We have now that $v$ is an entire antisymmetric supersolution
on $U$ with $v_{\lambda_0}\geq 0$ in $H'$ by the weak maximum principle.
Arguing as in the proof of  Theorem \ref{main} in the previous section, we obtain $v_{\lambda_0}\equiv 0$ in $\R^{N}$. This  implies that $u$ is symmetric with respect to all planes $T_{\lambda_0,e}=T_{\lambda_0(e),e}$ for which \eqref{ST12} occur for a first time. More than that, the moving plane process above yield  monotonicity through lines perpendicular to $T_{\lambda_0,e}$: for every   $e\in S^1$
$$
\textrm{ $ u(x)-u(Q_{\lambda,e}(x))>0\quad$ for all $\lambda\in (\lambda_0,l) $ and  for all $x\in \Omega'_{\lambda,e}$ }.
$$
In particular for
all $e\in S^1$ and for all $\lambda\in \R$, we have either $u(x)\geq u(Q_{\lambda,e}(x))$ for all $x\in H_{\lambda,e}$ or $u(x)\leq u(Q_{\lambda,e}(x))$ for all $x\in H_{\lambda,e}$. Now by a well known result (which we include a proof in the Appendix for the readers convenience), $u$ coincides to a radial   function up to a translation which is decreasing. Hence $\supp(u)$ is a ball.\\
 We claim that $\overline{\Omega}=\supp(u)$. Indeed, assume on the contrary that   $\supp(u)\neq \overline{\Omega}$. Then there is a
 ball $B\subset\subset\Omega\setminus \supp(u)$,  such that $u\equiv 0$ in $B$. Consider the hyperplane $T$ separating $B$ and $\supp(u)$.   It is clear that $u\equiv0$ on the halfspace $H$ with boundary $T$ containing $B$. Let $e\in S^1$ normal to $T$ and contained in $H$.  Now by moving the planes      $T_{\lambda,e}$ as above, we get, for very $\lambda\in (\lambda_0,l)$
 $$
\textrm{ $ u(x) > u(Q_{\lambda,e}(x))\geq0\quad$    for all $x\in \Omega'_{\lambda,e}$ }.
$$
This, in particular, implies that $u>0$ in $ \Omega\cap H_{\lambda_0,e} $ which is impossible.

\end{proof}

\begin{satz}[Theorem \ref{main} and Theorem \ref{main2} for $N=1$]\label{N=1}
 Let $c\in\R$ and  $\Omega\subset\R$ be a bounded open set. Let $f:\R\to\R$ be locally Lipschitz and assume that there is a function $u\in C(\overline{\Omega})$, which is nonnegative, nontrivial in $\O$ and satisfies
\begin{equation}
\left\{\begin{aligned}
 \hl u &= f(u) &&\text{ in $\Omega$;}\\
 u&=0  &&\text{ in $\R\setminus\Omega$;}\\
 \left(\partial_\eta \right)_{s} u&=c  &&\text{ on $\partial \Omega$.}
\end{aligned}\right.
\end{equation}
 Then $\Omega=(\a,\b)$ for some $\a,\b\in \R$, $\a<\b$ and $u>0$ in $\Omega$.
\end{satz}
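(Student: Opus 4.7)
The plan is to adapt the moving plane argument from Theorem \ref{teil2} to dimension one, and then use a short real-analyticity argument to rule out extra components. Note that $\Omega$, being open and bounded in $\R$, is a finite disjoint union $\bigcup_{i=1}^m (\alpha_i,\beta_i)$ with $\beta_i<\alpha_{i+1}$; the ``hyperplanes'' $T_\lambda$ in 1D are single points, so Situation 2 of \eqref{ST12} cannot occur, and only Situation 1 is relevant.

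The moving plane method in direction $e=1$ runs just as in the proof of Theorem \ref{teil2}: moving $T_\lambda=\{\lambda\}$ from $\max_{x\in\Omega}x=\beta_m$ downward, the first time $Q_\lambda(\Omega\cap(\lambda,\infty))$ meets $\partial\Omega$ occurs at $\lambda_0=(\alpha_m+\beta_m)/2$, with $P_0=\alpha_m$, $\bar P_0=\beta_m$, and $U=\Omega'_{\lambda_0}=(\alpha_m,\lambda_0)$. The argument of Theorem \ref{teil2} (decreasing an auxiliary parameter $\tilde\lambda$ and using small-measure weak and strong maximum principles) yields $v_{\lambda_0}:=u-u\circ Q_{\lambda_0}\geq 0$ on $H'=(-\infty,\lambda_0)$, $v_{\lambda_0}$ being an entire antisymmetric supersolution of $\hl v=c_f(x)v$ on $U$ with $\|c_f\|_\infty\leq L_f$. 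By Corollary \ref{strong1}, either $v_{\lambda_0}\equiv 0$ on $\R$, or $v_{\lambda_0}>0$ on $U$; the second alternative is excluded by Proposition \ref{hopf} applied with $B_1=(\alpha_m,\alpha_m+\varepsilon)\subset U$, which would give $-\liminf_{t\to 0^+}v_{\lambda_0}(\alpha_m+t)/t^s<0$, whereas the overdetermined condition at $\alpha_m$ and $\beta_m$ yields
\[
\lim_{t\to 0^+}\frac{v_{\lambda_0}(\alpha_m+t)}{t^s}=\lim_{t\to 0^+}\frac{u(\alpha_m+t)}{t^s}-\lim_{t\to 0^+}\frac{u(\beta_m-t)}{t^s}=(-c)-(-c)=0,
\]
a contradiction. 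Therefore $u$ is symmetric about $\lambda_0$.

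By this symmetry, any component $(\alpha_i,\beta_i)$ with $i<m$ is reflected to an interval inside $(\beta_m,\infty)\cap\Omega=\emptyset$, on which $u\equiv 0$; hence $u\equiv 0$ on every such component, while $u\not\equiv 0$ together with the strong maximum principle force $u>0$ on $(\alpha_m,\beta_m)$. In particular $\supp u=[\alpha_m,\beta_m]$. Suppose such a component $(\alpha_i,\beta_i)$ existed. Then for $x\in(\alpha_i,\beta_i)\subset(-\infty,\alpha_m)$ we would have
\[
f(0)=\hl u(x)=-c_{1,s}\int_{\alpha_m}^{\beta_m}\frac{u(y)}{|x-y|^{1+2s}}\,dy.
\]
The right-hand side is real-analytic in $x$ on $(-\infty,\alpha_m)$; if it were constant on $(\alpha_i,\beta_i)$ it would be constant on all of $(-\infty,\alpha_m)$, and since it tends to $0$ as $x\to-\infty$ it would vanish identically, contradicting $u>0$ on $(\alpha_m,\beta_m)$. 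Hence $\Omega=(\alpha_m,\beta_m)$ and $u>0$ on $\Omega$, as claimed.

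The main technical obstacle is the combined step of running the Lipschitz moving plane argument (to obtain $v_{\lambda_0}\geq 0$ on $H'$) and then carefully computing the fractional normal derivative of $v_{\lambda_0}$ at the corner point $\alpha_m$; by contrast, the more delicate ingredients of Theorems \ref{teil1} and \ref{teil2} (the orthogonal-plane Situation 2, the corner boundary Lemma \ref{lem:max-corner}, and the separate connectedness step with a perpendicular moving plane) are either trivially absent or replaced here by the short real-analyticity remark above.
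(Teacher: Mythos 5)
Your first step is exactly the paper's: you move points from the right, first touching at $\lambda_0=(\alpha_m+\beta_m)/2$, run the Theorem~\ref{teil2} machinery to obtain $v_{\lambda_0}\geq 0$ on $(-\infty,\lambda_0)$, and then use Proposition~\ref{hopf} together with the vanishing of $(\partial_\eta)_s v_{\lambda_0}(\alpha_m)$ (which you compute correctly) to force $v_{\lambda_0}\equiv 0$, hence $u$ symmetric about $\lambda_0$ and $u\equiv 0$ outside $[\alpha_m,\beta_m]$; you also correctly observe that Situation~2 of \eqref{ST12} is vacuous in one dimension. Where you genuinely diverge is in ruling out the extra component. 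The paper then moves points a second time, now from the left to $(a+b)/2$, and shows $u$ must also be symmetric about that midpoint, which combined with the first symmetry and the Dirichlet condition forces $u\equiv 0$ on all of $\R$, a contradiction. You instead note that on the extra component $u\equiv 0$, so $\hl u(x)=-c_{1,s}\int_{\alpha_m}^{\beta_m}u(y)|x-y|^{-1-2s}\,dy$ must equal the constant $f(0)$ there; since this function is real-analytic on $(-\infty,\alpha_m)$ and tends to $0$ at $-\infty$, it must vanish identically, forcing $\int u=0$ and hence $u\equiv 0$, again a contradiction. Both routes are valid; your version replaces a second pass of the moving-plane/Hopf machinery by a short analyticity observation that is specific to the nonlocal operator and arguably cleaner.

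One small inaccuracy: you attribute $u>0$ on $(\alpha_m,\beta_m)$ to the strong maximum principle alone. Since $f(0)$ may be negative, $u$ is not in general an entire supersolution of a homogeneous linear problem $\hl u=c(x)u$, so Corollary~\ref{strong1} (even via Remark~\ref{bemhopf}) is not directly applicable. The paper obtains positivity instead from the strict monotonicity $(S_\lambda)$ built up along the moving-point process, and you should do the same. Note, however, that this does not affect your analyticity contradiction, which only requires that $u$ be nonnegative and nontrivial with support in $[\alpha_m,\beta_m]$.
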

\begin{proof}
Now assume that $\Omega$ has at least two different connected components $(\a,\b)$ and $(a,b)$  with $a<b<\a<\b$.  Note that as in the case $N\geq 2$ we  can move points (instead of moving planes!) from the right up to $\lambda_{0}=(\a+\b)/2$, so that $v(x):=u(x)-u(\bar{x})$ solves
\[
 \hl v\geq -c_f(x) v \qquad\text{in $(a,\lambda_0)$}
\]
and $v(x)\geq 0$ for $x<\lambda_0 $ by arguing as in the proof of Theorem \ref{teil2}. Note that only interior touching can occur.
 Hence by Hopf's Lemma we obtain $v\equiv 0$ on $\R$, but this gives $u\equiv 0$ on $\R\setminus (\a,\b)$. Next moving from the left up to  $\lambda_{0}=(a+b)/2$ implies, as previously,    $u\equiv 0$ in $(a,b)$. Therefore $u\equiv0$ in $\R$ leading to a contradiction. The positivity of $u$ finally follows as in Theorem \ref{teil2} by the monotonicity which is a byproduct of the  moving plane method.
\end{proof}

\section{Appendix}
The result below was stated in \cite{T}.
\begin{prop}\label{Tobias}
Let $u:\R^{N}\to\R$ be continuous and such that $\lim_{|x|\to\infty}u(x)=c_\infty\in \R\cup\{\pm\infty\}$ exists. Then the following statements are equivalent:
\begin{enumerate}
\item[(i)] There is $z\in \R^{N}$, such that $u(\cdot-z)$ or $-u(\cdot-z)$ is radially symmetric and decreasing in the radial variable.
\item[(ii)] For every half space $H\in \cH$ we either have $u(x)-u(Q_H(x))\geq0$ for all $x\in H$ or $u(x)-u(Q_H(x))\leq0$ for all $x\in H$. We say $H$ is dominant or subordinate for $u$ respectively.
\end{enumerate}
Here $\cH$ is the set of all affine half spaces in $\R^{N}$ and $Q_{H}$ is the reflection at $\partial H$ for any $H\in \cH$.
\end{prop}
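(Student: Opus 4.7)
My plan is to treat the two implications separately.

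For (i)$\Rightarrow$(ii), after translating so that the centre $z$ is the origin, the elementary identity
\[
|Q_H(x)|^2 - |x|^2 = -4\lambda(x\cdot e - \lambda), \qquad H = H_{\lambda,e},
\]
gives on $H$ (where $x\cdot e > \lambda$) the sign of $|Q_H(x)|-|x|$ as the sign of $-\lambda$. Hence when $0\in\overline H$ (i.e.\ $\lambda \leq 0$) the reflection pushes points away from the centre, so the radial monotonicity of $u$ makes $H$ dominant; the case $\lambda\geq 0$ is symmetric and gives $H$ subordinate.

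For (ii)$\Rightarrow$(i), the case $u\equiv c_\infty$ is trivial, so assume $u$ is non-constant. My first step is, for each $e\in S^{N-1}$, to produce a unique symmetry hyperplane $P_e := \{x\cdot e = \lambda^*(e)\}$. Setting $A(e) := \{\lambda : H_{\lambda,e}\text{ dominant}\}$ and $B(e)$ analogously for subordinate, both are closed by continuity of $u$ and $A(e)\cup B(e) = \R$ by (ii). If $A(e) = \R$ then dominance at every $\lambda$ forces $u$ non-decreasing in direction $e$; combined with $\lim_{|x|\to\infty}u(x) = c_\infty$ this gives $u\equiv c_\infty$, contradicting non-constancy. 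So $A(e)\neq\R$ and symmetrically $B(e)\neq\R$; connectedness of $\R$ then gives $A(e)\cap B(e)\neq\emptyset$, producing $\lambda^*(e)$. Uniqueness follows because two parallel symmetry hyperplanes would generate a nontrivial translation preserving $u$, again contradicting the limit condition.

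The crux of the argument is identifying a common centre $z \in \bigcap_{e\in S^{N-1}} P_e$. Without loss of generality $c_\infty < \sup u$ (otherwise apply the same argument to $-u$), so pick $t \in (c_\infty, \sup u)$. The superlevel set $X_t := \{u \geq t\}$ is closed and bounded by the limit condition, hence compact, and has positive Lebesgue measure. Since each reflection $Q_e$ preserves $u$ it preserves $X_t$; being affine, it therefore fixes the centroid $\bar X_t := |X_t|^{-1}\int_{X_t} x\,dx$. Thus $\bar X_t \in P_e$ for every $e$. Moreover $\bigcap_e P_e$ has at most one point: if $z_1\neq z_2$ both belonged to it, then with $e_0 := (z_2-z_1)/|z_2-z_1|$ we would have $z_1\cdot e_0 \neq z_2\cdot e_0$ while $P_{e_0}$ forces equality. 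Hence $z := \bar X_t$ is the unique common centre.

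Finally, $u$ is invariant under reflection across every hyperplane through $z$, and such reflections generate the orthogonal group $O(N)$ about $z$, so $u$ is radially symmetric: $u(x) = v(|x-z|)$ for some continuous $v$. Monotonicity of $v$ then follows from (ii): were $v$ non-monotone, one could choose $r_1<r_2$ and $r_3<r_4$ clustered near a local extremum with $v(r_2)>v(r_1)$ and $v(r_3)>v(r_4)$, and then find a single half-space $H$ not containing $z$ realising both distance-pairs $(r_2,r_1)$ and $(r_4,r_3)$ (the constraint $T\geq|S-2\mu|$ on realised pairs admits a common $\mu$ when the $r_i$'s are close enough to the extremum); then dominance of $H$ would fail at the second pair and subordinance at the first, contradicting (ii). This completes (i). I expect the centre-finding step to be the main obstacle of the whole argument, and the centroid trick resolves it cleanly thanks to the compactness of superlevel sets coming from the limit at infinity.
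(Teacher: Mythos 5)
Your proof is correct in its essentials and takes a genuinely different route in the key step of locating the centre of symmetry. The paper determines a symmetry hyperplane $\{x_i=s_i\}$ in each coordinate direction, translates so $s_i=0$, and then upgrades from coordinate reflections to full radial symmetry by exploiting evenness: if $H\ni 0$ is dominant, evenness of $u$ forces $\partial H$ to be a symmetry hyperplane. You instead observe that the decay to $c_\infty$ makes the superlevel set $X_t$ compact with positive measure, so its centroid is an affine invariant fixed by every symmetry reflection $Q_e$ and hence lies on every $P_e$; this identifies the unique common centre $z$ directly, and radial symmetry follows since the $P_e$ are then exactly the hyperplanes through $z$. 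This centroid device is a clean alternative to the paper's two-stage argument. Where you do more work than necessary is the closing monotonicity step: the paper reads off radial monotonicity for free from the half-space monotonicity (\ref{weth}) established en route, whereas your two-pair/common-halfspace contradiction requires care with degenerate local extrema of $v$, and in dimension $N=1$ the realizability constraint $T\geq|S-2\mu|$ degenerates to an equality, so the existence of a common $\mu$ needs a separate justification there. You have in fact already proved the needed half-space monotonicity when ruling out $A(e)=\R$ (dominance of every $H_{\lambda,e}$ with $\lambda<\lambda^*(e)$ forces $u$ nondecreasing in direction $e$ on $\{x\cdot e<\lambda^*(e)\}$); invoking that again after translating $z$ to the origin yields $v$ non-increasing immediately and would close this soft spot more robustly. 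You also prove (i)$\Rightarrow$(ii) explicitly via the reflection identity, which the paper simply cites from \cite{T}; that direction is correct.
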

The following proof is communicated to the authors by Tobias Weth.
\begin{proof}
Assertion $(i)$ implies $(ii)$ is obvious and can be found in  \cite{T}. Thus we only need to show that $(ii)$ implies $(i)$. Without restriction, we may assume that $u$ is not constant. Consider the halfspace $H_\lambda=\{x\in \R^{N}\;:\; x_1>\lambda\}$, for $\lambda\in \R$ and the open set
\[
I:=\{\lambda\in \R\;:\; u(x)>u(Q_{H_\lambda}(x)) \text{ for some $x\in H_{\lambda}$}\}.
\]
Since $u$ is not constant, we may assume, replacing $u$ with $-u$ if necessary, that $u$ is larger than $c_\infty$ at some point in $\R^{N}$. Hence there exists a maximal $s_1\in \R\cup \{\infty\}$ such that $I$ contains the interval $(-\infty,s_1)$. By $(ii)$ the half spaces $H_\lambda$, $\lambda\in(-\infty,s_1)$ are dominant for $u$, which implies that
\begin{equation}\label{weth}
\textrm{ $u$ is nondecreasing in $x_1$ in the set $\{x\in \R^{N}\;:\; x_1<s_1\}$.}
\end{equation}
This forces $s_1<\infty$, since $u$ is not constant and tends to $c_\infty$ as $|x|\to\infty$. By the maximal choice of $s_1$ and the continuity of $u$, $u$ is symmetric with respect to the hyperplane $\{x_1=s_1\}$. By the same argument, we can find $s_i\in \R$, such that $u$ is symmetric with respect to the hyperplanes $\{x_i=s_i\}$ for $i=2,\ldots,N$. Translating $u$ if necessary, we may assume, that $s_i=0$ for $i=1,\ldots,N$, so that $u$ is symmetric with respect to all coordinate reflections. This implies that $u$ is even, i.e. $u(x)=u(-x)$ for all $x\in \R^N$. As a consequence, if $H\in \cH_0=\{H\in \cH\;:\; 0\in \partial H\}$ is such that $u(x)\geq u(Q_{H}(x))$ for all $x\in H$, then also $u(Q_{H}(x))=u(-Q_{H}(x))\geq u(-x)=u(x)$ for every $x\in H$, so that $u$ is symmetric with respect to the hyperplane $\partial H$. This implies that $u$ is symmetric with respect to any hyperplane containing $0$, so that $u$ is radially symmetric (see e.g. \cite[Section 2.1]{T}). Finally, (\ref{weth}) implies that $u$ is decreasing in the radial variable.
\end{proof}

\bibliographystyle{amsplain}

\end{document}